\def\frk{\frak}               
\def\Phi{{\frk n}}
\def\Phi{{\frk N}}
\def\opn#1#2{\def#1{\operatorname{#2}}} 
\opn\chara{char} \opn\length{\ell} \opn\pd{pd} \opn\rk{rk}
\opn\projdim{proj\,dim} \opn\injdim{inj\,dim} \opn\rank{rank}
\opn\depth{depth} \opn\sdepth{sdepth} \opn\fdepth{fdepth}
\opn\grade{grade} \opn\height{height} \opn\embdim{emb\,dim}
\opn\codim{codim}  \opn\min{min} \opn\max{max}
\opn\Tr{Tr} \opn\bigrank{big\,rank}
\opn\superheight{superheight}\opn\lcm{lcm}
\opn\trdeg{tr\,deg}
\opn\reg{reg} \opn\lreg{lreg} \opn\ini{in} \opn\lpd{lpd}
\opn\size{size}
\opn\div{div} \opn\Div{Div} \opn\cl{cl} \opn\Cl{Cl}
\opn\Spec{Spec} \opn\Supp{Supp} \opn\supp{supp} \opn\Sing{Sing}
\opn\Ass{Ass} \opn\Min{Min}
\opn\Ann{Ann} \opn\Rad{Rad} \opn\Soc{Soc}
\opn\Im{Im} \opn\Ker{Ker} \opn\Coker{Coker} \opn\Am{Am}
\opn\Hom{Hom} \opn\Tor{Tor} \opn\Ext{Ext} \opn\End{End}
\opn\Aut{Aut} \opn\id{id}  \opn\deg{deg}
\opn\nat{nat}
\opn\pff{pf}
\opn\Pf{Pf} \opn\GL{GL} \opn\SL{SL} \opn\mod{mod} \opn\ord{ord}
\opn\Gin{Gin} \opn\Hilb{Hilb}
\opn\aff{aff} \opn\con{conv} \opn\relint{relint} \opn\st{st}
\opn\lk{lk} \opn\cn{cn} \opn\core{core} \opn\vol{vol}
\opn\link{link} \opn\star{star}
\opn\gr{gr}
\def\pot#1#2{#1[\kern-0.28ex[#2]\kern-0.28ex]}
\opn\dirlim{\underrightarrow{\lim}}
\opn\inivlim{\underleftarrow{\lim}}
\let\to=\rightarrow
\def\Implies{\ifmmode\Longrightarrow \else
        \unskip${}\Longrightarrow{}$\ignorespaces\fi}
\def\implies{\ifmmode\Rightarrow \else
        \unskip${}\Rightarrow{}$\ignorespaces\fi}
\def\iff{\ifmmode\Longleftrightarrow \else
        \unskip${}\Longleftrightarrow{}$\ignorespaces\fi}
\newtheorem{Theorem}{Theorem}[]
\newtheorem{Lemma}[Theorem]{Lemma}
\newtheorem{Corollary}[Theorem]{Corollary}
\newtheorem{Remark}[Theorem]{Remark}
\newtheorem{Conjecture}[Theorem]{Conjecture}
\let\epsilon\varepsilon
\let\phi=\varphi
\let\kappa=\varkappa
\def\qed{\ifhmode\textqed\fi
      \ifmmode\ifinner\quad\qedsymbol\else\dispqed\fi\fi}
\def\textqed{\unskip\nobreak\penalty50
       \hskip2em\hbox{}\nobreak\hfil\qedsymbol
       \parfillskip=0pt \finalhyphendemerits=0}
\def\dispqed{\rlap{\qquad\qedsymbol}}
\opn\dis{dis}
\def\pnt{{\raise0.5mm\hbox{\large\bf.}}}
\opn\Lex{Lex}
\begin{document}
\title{  Around General Neron Desingularization}

\author{ Dorin Popescu }
\thanks{The  support from the the project  ID-PCE-2011-1023, granted by the Romanian National Authority for Scientific Research, CNCS - UEFISCDI   is gratefully acknowledged. The author thanks  CIRM, Luminy who provided excellent conditions and stimulative atmosphere in the main stage of our work.}

\address{Dorin Popescu, Simion Stoilow Institute of Mathematics , Research unit 5,
University of Bucharest, P.O.Box 1-764, Bucharest 014700, Romania}
\email{dorin.popescu@imar.ro}

\maketitle

\begin{abstract} It gives  new forms  of General Neron Desingularization and new applications.

 \noindent
  {\it Key words } : Smooth morphisms,  regular morphisms, Bass-Quillen Conjecture, arcs.\\
 {\it 2010 Mathematics Subject Classification: Primary 13B40, Secondary 14B25,13H05,13J15.}
\end{abstract}

\vskip 0.5 cm

\section*{Introduction}

M.  Artin \cite{A} proved  that  the convergent power series rings
over $\bf C$ has the so called today the Artin approximation property. Using Artin's proof, Ploski \cite{Pl} stated a theorem in the idea of Neron desingularization \cite{N}. This was the starting point of our General Neron Desingularization (see  Theorem \ref{gnd}). Later other types of such theorems were published. One was  conjectured by Artin \cite{A3} and solved positively in \cite{CP} (see  Theorem \ref{cp}) using  an idea of \cite[Theorem 2.3]{AD}. Another one similarly to  the so called the strong Artin approximation property  \cite{P2} (see Theorem \ref{p2}), which has an interesting consequence (see Corollary \ref{corm}) studied in a special case in Section 2.
Computing  General Neron Desingularization with  SINGULAR (see \cite{Sing}) it was necessary to use  in \cite{AP}  maps defined by formal power series but not in an  explicit way. This inspires us Theorem \ref{arc}.

 Let $(A,m)$ be a discrete valuation ring, $B$ a finite type $A$-algebra, $\hat A$ its completion and $v:B\to A/m^{2c+1}$ an $A$-morphism, for a certain  $c\in \bf N$ depending on $B$. Then  there exists an $A$-morphism $v':B\to \hat A$ such that $v'\equiv v\ \mbox{modulo}\ m$ using  \cite[Theorem 18]{AP} - a theorem similar to the  Greenberg's form of the strong Artin approximation. Our Theorem \ref{arc1} shows that the set of all $A$-morphisms $B\to \hat A$ lifting $v$ is in bijection with an affine space over $\hat A$. This could be useful in the theory of  arcs  (see Corollary \ref{arc2}).
 The paper ends with an application of the General Neron Desingularization to an extension of the Bass-Quillen Conjecture. More precisely,  if $R$ is for example  an equicharacteristic regular local ring and  $I\subset R[X]$, $X=(X_1,\ldots,X_r)$ is a monomial ideal, then any finitely generated projective $R[X]/I$-module is free (see Theorem \ref{m1}).

\section{A version of General Neron Desingularization in the idea of Ploski}

A ring morphism $u:A\to A'$ has  {\em regular fibers} if for all prime ideals $P\in \Spec A$ the ring $A'/PA'$ is a regular  ring, i.e. its localizations are regular local rings. It has {\em geometrically regular fibers}  if for all prime ideals $P\in \Spec A$ and all finite field extensions $K$ of the fraction field of $A/P$ the ring  $K\otimes_{A/P} A'/PA'$ is regular. If $A\supset {\bf Q}$ then regular fibers of $u$ are geometrically regular. We call $u$ {\em regular} if it is flat and its fibers are geometrically regular. A Henselian local ring $(A,m)$ is {\em excellent} if it is Noetherian and the completion map $A\to {\hat A}$ is regular. A regular morphism is {\em smooth} if it is finitely presented and it is {\em essentially smooth} if it is a localization of a finitely presented morphism.

\begin{Theorem} (Ploski \cite{Pl}) \label{pl} Let ${\bf C}\{x\}$, $x=(x_1,\ldots,x_n)$, $f=(f_1,\ldots,f_s)$ be some convergent power series from ${\bf C}\{x,Y\}$, $Y=(Y_1,\ldots,Y_N)$ and $\hat y\in {\bf C}[[x]]^N$ with $\hat y(0)=0$ be a solution of $f=0$. Then the map $v:B={\bf C}\{x,Y\}/(f)\to {\bf C}[[x]]$ given by $Y\to \hat y$ factors through an $A$-algebra of type $B'={\bf C}\{x,Z\}$ for some variables $Z=(Z_1,\ldots,Z_s)$, that is $v$ is a composite map $B\to B'\to {\bf C}[[x]]$.
\end{Theorem}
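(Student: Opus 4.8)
The plan is to deduce Ploski's theorem from General Neron Desingularization (Theorem \ref{gnd}), applied to the regular morphism $A = {\bf C}\{x\} \to {\bf C}[[x]]$. Both rings are Noetherian local rings with the same residue field ${\bf C}$, and the completion of the convergent power series ring is ${\bf C}[[x]]$, so the map ${\bf C}\{x\} \to {\bf C}[[x]]$ is regular (it is the composite of ${\bf C}\{x\}$ with its completion, the former being excellent). The solution $\hat y \in {\bf C}[[x]]^N$ of $f = 0$ is exactly an $A$-morphism $v \colon B = {\bf C}\{x, Y\}/(f) \to {\bf C}[[x]]$, so General Neron Desingularization yields a factorization $B \to B' \to {\bf C}[[x]]$ where $B'$ is a smooth (equivalently, essentially smooth, finitely presented) $A$-algebra. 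The whole content of Ploski's statement beyond a direct application of \ref{gnd} is that this $B'$ can be taken of the very special shape ${\bf C}\{x, Z\}$ with exactly $s$ new variables $Z = (Z_1, \ldots, Z_s)$ — a \emph{free} convergent power series extension rather than an arbitrary smooth algebra.

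First I would recall that a smooth $A$-algebra of finite type is, locally around the image point, standard smooth: near the prime $\mathfrak{q}$ that is the kernel of $B' \to {\bf C}[[x]]$, after renaming we may write $B'_{\mathfrak q}$ (or a suitable localization that still receives the map) as a localization of $A[T_1, \ldots, T_m]/(g_1, \ldots, g_c)$ where the $c \times c$ minor $\partial(g_1, \ldots, g_c)/\partial(T_1, \ldots, T_c)$ is invertible at $\mathfrak q$. Since the residue field at $\mathfrak q$ is ${\bf C}$ and the value $v$ sends everything to $0$, we can arrange that the image of each $T_i$ lies in the maximal ideal, i.e. $\hat y$ and the auxiliary coordinates all vanish at the origin. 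The point now is that the $T_{c+1}, \ldots, T_m$ are genuinely free parameters: the implicit function theorem for convergent power series (the analytic inverse/implicit function theorem over ${\bf C}$) lets us solve $g_1 = \cdots = g_c = 0$ for $T_1, \ldots, T_c$ as convergent power series in $x$ and in the remaining variables $Z := (T_{c+1}, \ldots, T_m)$ — here is where one must check the number of free variables comes out to $s$, or, more honestly, one reduces to the case $m - c = s$ by the standard manipulations in the proof of General Neron Desingularization (adding or removing redundant variables and equations). This produces a ${\bf C}$-algebra map ${\bf C}\{x, Z\} \to B'_{\text{loc}}$ which is an isomorphism onto the relevant localization, hence a factorization ${\bf C}\{x\}\text{-alg} \colon B \to {\bf C}\{x, Z\} \to {\bf C}[[x]]$, the last map sending $Z$ to the convergent series recording the auxiliary coordinates of the chosen smooth chart evaluated along $\hat y$.

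The main obstacle, and the only genuinely non-formal step, is controlling the \emph{number} of new variables so that it is exactly $s$ (the number of equations $f_1, \ldots, f_s$), rather than some uncontrolled $m - c$. I expect this to follow the bookkeeping inside the proof of General Neron Desingularization: one produces the smooth factoring algebra as a localization of $A[Y, \text{extra vars}]$ modulo $(f, \text{extra relations})$ in such a way that the extra relations can be used, via Cramer-type formulas and the invertibility of the relevant minor, to eliminate exactly the $Y$-variables, leaving $s$ free ones; then convergence of the eliminating series is exactly the analytic implicit function theorem. A secondary point to be careful about is passing from a \emph{localization} of a finitely presented smooth algebra to an honest convergent power series ring ${\bf C}\{x, Z\}$: this is legitimate because we only need the map to ${\bf C}[[x]]$ to factor, and a convergent power series ring already surjects onto (indeed, analytically-locally realizes) that localization at the origin, since all the denominators we invert are units in ${\bf C}\{x, Z\}$ (they are nonzero at $0$, by our normalization that everything lands in the maximal ideal). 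Apart from these two points the argument is a routine transcription of General Neron Desingularization into the convergent-analytic category using the implicit function theorem.
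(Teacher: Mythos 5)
The paper does not prove Theorem~\ref{pl}: it is a cited theorem (Ploski~\cite{Pl}) offered as historical motivation for General Neron Desingularization, so there is no internal proof to compare against. The closest analogue in the paper is the proof of Theorem~\ref{pp}, which runs through Grothendieck's local structure of smooth morphisms and replaces the resulting standard smooth algebra by a Henselization $A[Z]^h$; your sketch is essentially the analytic transcription of that argument.

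However, your argument has a genuine gap at the very first step. You apply Theorem~\ref{gnd} to $A = {\bf C}\{x\}$, $A' = {\bf C}[[x]]$ and $B = {\bf C}\{x, Y\}/(f)$, but Theorem~\ref{gnd} requires $B$ to be a \emph{finite type} $A$-algebra. Since the $f_i$ are convergent power series in $(x,Y)$, not polynomials in $Y$ over $A$, the ring ${\bf C}\{x,Y\}$ is not finitely generated over ${\bf C}\{x\}$, and neither is its quotient $B$. So GND does not apply to your $B$, and the reduction from the analytic system $f$ to a finitely presented one is precisely the nonformal content hidden here (in Ploski's own proof it is resolved via Artin's/Tougeron's techniques, Weierstrass division, and a Newton iteration, not by citing a desingularization theorem as a black box). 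This is exactly the hypothesis that \emph{does} hold in Theorem~\ref{pp}, which is what makes that proof valid while your transposition is not. Beyond this, you also flag (correctly) that the count of $s$ auxiliary variables does not drop out of the black-box statement of GND, and that passing from a localization of $A[Z,T]/(g)$ to the honest ring ${\bf C}\{x,Z\}$ needs an argument (Henselianity of ${\bf C}\{x,Z\}$ and an analytic implicit function theorem); those are fillable in the style of the proof of Theorem~\ref{pp}, but the finite-type issue makes the whole strategy, as written, fail before it begins.
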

This result shows a particular case of  the following theorem in Krull dimension $>1$, the case when $A,A'$  are DVR being given by the so called N\'eron $p$-desingularization \cite{N}, \cite{A1}.

\begin{Theorem} (General Neron Desingularization, Popescu \cite{P3}, \cite{P''}, Andre \cite{An}, Swan \cite{S}, Spivakovski \cite{Sp})\label{gnd}  Let $u:A\to A'$ be a  regular morphism of Noetherian rings and $B$ a finite type $A$-algebra. Then  any $A$-morphism $v:B\to A'$   factors through a smooth $A$-algebra $C$, that is $v$ is a composite $A$-morphism $B\to C\to A'$.
\end{Theorem}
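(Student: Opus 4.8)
The plan is to prove the factorization by a Noetherian induction on the ``non-smoothness locus'' of $B$ over $A$, the base case being classical N\'eron desingularization over a discrete valuation ring. Write $B=A[X_1,\dots,X_n]/I$ with $I=(f_1,\dots,f_m)$ and attach to $B$ the ideal $H_{B/A}\subseteq B$ generated by all products $\delta\cdot\bigl((f_{i_1},\dots,f_{i_r}):_B I\bigr)$, where $\delta$ runs over the $r\times r$ minors of the Jacobian matrix $(\partial f_i/\partial X_j)$ formed from rows $i_1,\dots,i_r$ (over all $r$). By the Jacobian criterion for smoothness, $V(H_{B/A})\subseteq\Spec B$ is exactly the locus where $B$ fails to be smooth over $A$, and $H_{B/A}$ behaves well under $A$-algebra maps and under localization. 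The decisive consequence is: if $v(H_{B/A})A'=A'$, then $v$ already factors through a smooth $A$-algebra --- choosing $h_1,\dots,h_k\in H_{B/A}$ and $a_i\in A'$ with $\sum_i v(h_i)a_i=1$, the algebra $C=B[T_1,\dots,T_k]/(\sum_i h_iT_i-1)$ is smooth over $A$ (it is covered by the opens $D(h_i)$, on each of which it is a polynomial ring over the smooth $A$-algebra $B[h_i^{-1}]$), and $v$ factors as $B\to C\to A'$ via $T_i\mapsto a_i$. One may moreover reduce to $(A',\mathfrak m')$ Noetherian local by localizing, and simplify the presentation of $B$ by standard manipulations.

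It then suffices to show that if $v(H_{B/A})A'\ne A'$ there is a finite type $A$-algebra $D$ with $A$-morphisms $B\to D$ and $w\colon D\to A'$ satisfying $w|_B=v$ and, inside $\Spec A'$,
\[
V\bigl(w(H_{D/A})A'\bigr)\subsetneq V\bigl(v(H_{B/A})A'\bigr)
\]
(or at least a strict drop in a suitable numerical invariant of this closed set). Since $A'$ is Noetherian this process must stop, and when it does we are back in the case $v(H_{D/A})A'=A'$ handled above. To build $D$ one works over a minimal prime $P$ of $v(H_{B/A})A'$, with $\mathfrak p=P\cap A$: flatness of $A\to A'$ together with \emph{geometric} regularity of the fibre $A'_P\otimes_A k(\mathfrak p)$ supplies, over that fibre, enough ``smooth directions'' and a Jacobian minor of maximal rank; lifting this data to $A'$ and adjoining to $B$ new variables together with relations that carry out N\'eron's desingularization trick on the image of $v$ (a change of coordinates which, using that the chosen minor is invertible after localization, absorbs the discrepancy between $v$ and the lifted fibre data), one obtains via the Jacobian criterion applied to the resulting presentation of $D$ that $w(H_{D/A})A'\not\subseteq P$, which is precisely what powers the induction. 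When $A$ is a discrete valuation ring this step is exactly N\'eron's $p$-desingularization, which I would quote verbatim as the base case.

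The genuinely hard point is this one-step construction when the residue fields are imperfect or the characteristic is mixed: ordinary generic smoothness of the fibre fails in characteristic $p$, so one cannot just differentiate, and the hypothesis of \emph{geometric} --- rather than ordinary --- regularity of the fibres of $u$ has to be exploited in an essential way, namely one must extract $p$-th roots and rearrange a $p$-basis (separating transcendence basis) of the fibre. This operation is conceptually organized by Andr\'e--Quillen homology (vanishing of the second homology, and control of the first homology, for a geometrically regular homomorphism), but is technically heavy; it is what separates the general statement from N\'eron's classical one. The remaining difficulty is bookkeeping: establishing the functoriality and localization behaviour of $H_{B/A}$, ensuring that the new algebra $D$ does not create worse singularities elsewhere (so that the induction really terminates), and the simplifications of presentations --- all carried out by standard commutative algebra (the Jacobian criterion, Artin--Rees, the Cohen structure theorem, Fitting ideals).
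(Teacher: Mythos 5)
The paper does not prove Theorem~\ref{gnd}; it states it as a known result and cites the original sources (Popescu, Andr\'e, Swan, Spivakovski), so there is no proof in the paper against which to check you step by step. Your outline is a faithful high-level description of the standard proof from those references (Noetherian induction on $V(v(H_{B/A})A')$ with the Elkik-type ideal $H_{B/A}$, the easy factorization when $v(H_{B/A})A'=A'$ via $B[T]/(\sum h_iT_i-1)$, and the hard inductive step exploiting geometric regularity of fibres, $p$-bases and Andr\'e--Quillen homology), with only minor slips of emphasis --- the genuine base case of the induction is $v(H_{B/A})A'=A'$ rather than the DVR case (which is instead the historical prototype for the one-step improvement), and the ``strict drop'' statement is more delicate than you indicate and is precisely where most of the technical labour in Popescu's and Swan's treatments is concentrated.
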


The following theorem is a consequence of Theorem \ref{gnd} in the idea of Ploski (perhaps it  is  already known but we couldn't find a precise reference).

\begin{Theorem}\label{pp} Let $(A,m)$ be an excellent Henselian local ring, $\hat A$ its completion,  $B$ a finite type $A$-algebra and $v:B\to \hat A$ an $A$-morphism.  Then  $v$  factors through an $A$-algebra of type $ A[Z]^h$ for some variables $Z=(Z_1,\ldots,Z_s)$, where $A[Z]^h$ is the Henselization of $A[Z]_{(m,Z)}$.
\end{Theorem}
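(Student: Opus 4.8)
The plan is to apply General Neron Desingularization (Theorem \ref{gnd}) to the completion map $A\to\hat A$, which is regular precisely because $(A,m)$ is excellent, and then to massage the resulting smooth $A$-algebra into the required standard form $A[Z]^h$. So first I would invoke Theorem \ref{gnd} with $A'=\hat A$: since $B$ is of finite type over $A$ and $v\colon B\to\hat A$ is an $A$-morphism, it factors as $B\to C\xrightarrow{w}\hat A$ with $C$ a smooth $A$-algebra. The point of the factorization is now purely local: let $\pp=w^{-1}(m\hat A)$, the prime of $C$ at which $w$ lands (note $w$ maps into $m\hat A$ on the augmentation ideal, so $\pp$ lies over $m$). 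I may replace $C$ by a principal localization $C_g$ with $g\notin\pp$ so that $C$ becomes \emph{standard smooth} over $A$, i.e. $C=A[Y,T]/(F_1,\dots,F_m)$ with $Y=(Y_1,\dots,Y_s)$, $T=(T_1,\dots,T_m)$ and $\det(\partial F_i/\partial T_j)$ a unit in $C$; this is the classical local structure theorem for smooth morphisms (Elkik, or \cite{S}).

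Next I would reduce to the situation where the $Y$-variables are sent into $m$. Set $y_j=w(Y_j)\in\hat A$ and $\bar y_j=$ the residue of $y_j$ in $\hat A/m\hat A=A/m$; choose a lift $a_j\in A$ of $\bar y_j$ (possible since $A\to A/m$ is surjective), and substitute $Y_j\mapsto Y_j+a_j$. This is an $A$-algebra automorphism of $A[Y,T]$, so after renaming we may assume $w(Y_j)\in m\hat A$ for all $j$. Now consider the local homomorphism $A[Y,T]_{(m,Y,T')}\to\hat A$, where $T'=(T_1-w(T_1),\dots)$ — more carefully, after a similar translation in the $T$-variables (lifting the residues of $w(T_j)$, which lie in $A/m$, to $A$) we may assume $\pp\subseteq(m,Y,T)$, hence $\pp=(m,Y,T)$ since the latter is maximal lying over $m$ with residue field $A/m$. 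Because $\det(\partial F_i/\partial T_j)$ is a unit, the local ring $C_\pp=A[Y,T]_{(m,Y,T)}/(F)$ is essentially smooth over $A$ with the $F_i$ forming a regular sequence and the Jacobian criterion satisfied; by the implicit function theorem for Henselizations, the map $A[Y]_{(m,Y)}\to C_\pp^h$, $Y_j\mapsto$ (class of $Y_j$), identifies $C_\pp^h$ with $A[Y]^h$. Concretely: the equations $F=0$ can be solved for $T$ in terms of $Y$ inside the Henselization $A[Y]^h$ by the Henselian property applied to the unramified-on-$T$ system $F$, giving an $A$-algebra section $C\to C_\pp^h\cong A[Y]^h$. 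Composing $B\to C\to C_\pp^h\cong A[Y]^h$ and then the canonical $A[Y]^h\to\hat A$ induced by $w$ (which is well-defined because $\hat A$ is Henselian local and receives $w$), we obtain the desired factorization with $Z:=Y$.

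The main obstacle is the bookkeeping in the second paragraph: one must be careful that after all the translations the relevant prime of $C$ is exactly $(m,Y,T)$, that the Henselization $C_\pp^h$ is genuinely isomorphic to $A[Y]^h$ (this is where the standard-smooth presentation and the nonvanishing Jacobian are essential — the $T$-variables get eliminated by Hensel's lemma), and that the section $C\to A[Y]^h$ one builds is an $A$-algebra map compatible with $w$. A clean way to organize this is to quote the standard fact that an essentially smooth local $A$-algebra with residue field $A/m$, formally étale over a polynomial ring $A[Y]_{(m,Y)}$ after the above normalization, has Henselization $\cong A[Y]^h$; then the whole argument is: regular map $\Rightarrow$ GND factorization $\Rightarrow$ localize to standard smooth $\Rightarrow$ translate so the marked prime is $(m,Y,T)$ $\Rightarrow$ Henselize and eliminate $T$ $\Rightarrow$ push $v$ through $A[Y]^h\to\hat A$. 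I expect no essential difficulty beyond making these normalization steps precise, since all the hard analytic content is already packaged in Theorem \ref{gnd}.
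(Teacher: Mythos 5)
Your proposal is correct and follows essentially the same route as the paper's proof: invoke Theorem \ref{gnd} to factor $v$ through a smooth $A$-algebra, localize and use Grothendieck's local structure theorem to put it in standard-smooth form, translate variables so the relevant prime is $(m,Y,T)$, and then use the Henselian property of $A[Y]^h$ (étale neighbourhood / Hensel's lemma) to eliminate the $T$-variables. The only cosmetic difference is the shape of the standard-smooth presentation — the paper uses a single auxiliary variable $T$ and equation $g$ with $\partial g/\partial T$ inverted, while you use several $T_j$ with a Jacobian determinant inverted — but these are interchangeable versions of the same structure theorem.
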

\begin{proof} By Theorem \ref{gnd} we see that $v$ factors  through a smooth $A$-algebra $B'$, let us say $v$ is the composite map $B\to B'\xrightarrow{v'} \hat A$.
Using the local structure of smooth algebras given by Grothendieck (see \cite[Theorem 2.5]{S}) we may assume that   $B'_{v'^{-1}(m\hat A)}$ is a localization of a smooth $A$-algebra of type $(A[Z,T]/(g))_{g'h}$, where $Z=(Z_1,\ldots,Z_s)$, $g'=\partial g/\partial T$. Then there exists $h\in A[Z,T]$ such that $v'$ factors through $C=(A[Z,T]/(g))_{g'h}$  let us say $v'$ is the composite map $B'\to C\xrightarrow{w} \hat A$.

 Choose $z_0\in A^N$, $t_0\in A$, such that $(z_0,t_0)\equiv (\hat z,\hat t)$ modulo $m\hat A$. Changing $(Z,T)$ by $(Z-z_0,T-t_o)$ in $C$ we may suppose that $w(Z),w(T)$ are in $m\hat A$.
 Then $w$ extends to a map $w':C'=(A[Z]^h[T]/(g))_{g'h}\to \hat A$ and we have $C'\cong A[Z]^h$ since $C'$ is an etale neighborhood of  $A[Z]^h$.
\hfill\ \end{proof}

A Noetherian local ring $(A,m)$ has the {\em property of Artin approximation} if the solutions in $A$ of  every  system of polynomial equations $f$ over $A$ is dense with respect of the $m$-adic topology in the set of solutions of $f$ in the completion $\hat A$ of $A$.
In fact $(A,m)$ has the  property of Artin approximation if and only if   every  system of polynomial equations $f$ over $A$ has a solution in $A$ if it has one in $\hat A$.
The following theorem was conjectured by M. Artin in \cite{A2}.

\begin{Theorem} (Popescu \cite{P3}, \cite{P1}) \label{main} An excellent Henselian local ring has the property of  Artin approximation.
\end{Theorem}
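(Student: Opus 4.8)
The plan is to derive Theorem~\ref{main} from General Neron Desingularization, in the concrete shape already extracted in Theorem~\ref{pp}. By the equivalent formulation of the Artin approximation property recorded just before the statement, it suffices to prove: if a finite system of polynomial equations $f=(f_1,\dots,f_s)$ over $A$ in variables $Y=(Y_1,\dots,Y_N)$ has a solution $\hat y\in\hat A^N$, then it already has a solution in $A^N$. (Noetherianity of $A$ lets us assume $f$ finite, and the passage from this existence form to the $m$-adic density form is the standard device of absorbing the congruence $y\equiv\hat y\bmod m^c$ into the equations by introducing auxiliary variables.)

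So suppose $f(\hat y)=0$ with $\hat y\in\hat A^N$. I would put $B=A[Y]/(f)$ and let $v\colon B\to\hat A$ be the $A$-morphism determined by $Y\mapsto\hat y$. Since $A$ is excellent Henselian and $B$ is of finite type over $A$, Theorem~\ref{pp} applies: $v$ factors as
\[
B\xrightarrow{\ \varphi\ }A[Z]^h\xrightarrow{\ w\ }\hat A,
\]
where $A[Z]^h$ is the Henselization of $A[Z]_{(m,Z)}$ for some variables $Z$, and both arrows are $A$-morphisms.

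Next I would construct an $A$-retraction $\sigma\colon A[Z]^h\to A$. The evaluation $Z\mapsto 0$ is an $A$-algebra retraction $A[Z]\to A$; the preimage of $m$ under it is the prime $(m,Z)$, so it extends to a local $A$-homomorphism $A[Z]_{(m,Z)}\to A$. Because $A$ is Henselian, the universal property of Henselization makes this factor, uniquely, through an $A$-morphism $\sigma\colon A[Z]^h\to A$. Then $\sigma\circ\varphi\colon B\to A$ is an $A$-morphism, and its value $y\in A^N$ on the classes of the $Y_i$ satisfies $f_k(y)=0$ for every $k$, since each $f_k$ vanishes on those classes in $B$ and $\sigma\circ\varphi$ is an $A$-algebra homomorphism. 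Thus $y$ solves $f$ in $A$, as wanted.

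The real weight of the argument lies entirely in the invoked results, Theorem~\ref{gnd} and its consequence Theorem~\ref{pp}; granting those, the rest is formal, the only delicate points being the routine reduction to the existence form and the observation that a Henselian base forces $A[Z]^h$ to retract onto $A$. If one wishes to argue straight from Theorem~\ref{gnd} without Theorem~\ref{pp}, the same scheme works with $A[Z]^h$ replaced by any smooth $A$-algebra $C$ through which $v$ factors: one reduces $w\colon C\to\hat A$ modulo $m$ to a point $C\to\kappa$ (with $\kappa=A/m$), and then lifts it to a point $C\to A$ using the local structure theorem for smooth morphisms together with the multivariate Hensel lemma over $A$ — the single place where Henselianity is genuinely used, and hence the main obstacle on that route.
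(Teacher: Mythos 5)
Your proof is correct and follows essentially the same route as the paper: factor $v\colon B\to\hat A$ through $A[Z]^h$ via Theorem~\ref{pp}, then compose with the $A$-retraction of $A[Z]^h$ onto $A$ given by $Z\mapsto 0$. The only cosmetic difference is that the paper realizes this retraction by passing through the canonical map $A[Z]^h\to A[[Z]]$ and evaluating at $Z=0$, whereas you invoke the universal property of Henselization directly.
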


This theorem follows easily from Theorem \ref{gnd} using the Implicit Function Theorem. But it is much easier to apply Theorem \ref{pp}. Indeed, let $(A,m)$ be  an excellent Henselian local ring,  $f=(f_1,\ldots,f_r)$  some polynomials from $A[Y]$, $Y=(Y_1,\ldots,Y_N)$ and $\hat y\in \hat A^N$ a solution of $f=0$. We will show that $f$ has a solution in $A$. Set $B=A[Y]/(f)$ and $v:B\to \hat A$ the map given by $Y\to \hat y$. By Theorem \ref{pp} $v$ factors through $A[Z]^h$ for some $Z=(Z_1,\ldots,Z_s)$, that is $v$ is the composite map $B\xrightarrow{t} A[Z]^h\to \hat A$. Let $\alpha:A[Z]^h\to A[[Z]]$ be the canonical map and set $y=(\alpha t)(Y+(f))$. Then $y(0)$ is a solution of $f$ in $A$.

An extended   form of Theorem \ref{gnd} is the following theorem which is a positive answer  to a conjecture of M. Artin \cite{A3}.

\begin{Theorem} (Cipu-Popescu \cite{CP}) \label{cp} Let $u:A\to A'$ be a  regular morphism of Noetherian rings, $B$ a finite type $A$-algebra, $v:B\to A'$  an $A$-morphism and $D\subset \Spec B$ the open smooth locus of $B$ over $A$. Then there exist    a smooth $A$-algebra $C$ and two $A$-morphisms $t:B\to C$, $w:C\to A'$ such that $v=wt$ and $C$ is smooth over $B$ at ${t^*}^{-1}(D)$, $t^*:\Spec C\to \Spec B$ being induced by $t$.
\end{Theorem}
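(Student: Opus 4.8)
The plan is to rerun the proof of General Néron Desingularization (Theorem \ref{gnd}) while carrying the smooth locus along. Observe first that it is enough to produce a finite type $A$-algebra $C$ with $A$-morphisms $t\colon B\to C$, $w\colon C\to A'$ such that $v=wt$, $C$ is smooth over $B$ at $(t^*)^{-1}(D)$, and $C$ is smooth over $A$ at $(t^*)^{-1}(\Sigma)$, where $\Sigma=\Spec B\setminus D$: indeed the first condition already forces $C$ to be smooth over $A$ along $(t^*)^{-1}(D)$, since $B$ is smooth over $A$ along $D$, so the two conditions together give smoothness over $A$ everywhere. As in the proof of Theorem \ref{gnd} we may then reduce to the case where $A'$ is a complete Noetherian local ring; these reductions do not affect $D$, which depends only on $B$ and on the morphism $A\to B$.

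Recall that the proof of Theorem \ref{gnd} constructs the smooth $A$-algebra as a finite composite of elementary smoothing steps $B=B_0\xrightarrow{t_1}B_1\xrightarrow{t_2}\cdots\xrightarrow{t_n}B_n=C$, each $B_i$ of finite type over $A$ and $v$ factoring as $B\to B_i\to A'$, the induction being driven by the strict growth of the ideal $v(H_{B_i/A})A'$, where $H_{B_i/A}\subseteq B_i$ is the ideal defining the non-smooth locus of $B_i$ over $A$, so that $D_i:=\Spec B_i\setminus V(H_{B_i/A})$ is the smooth locus; the growth terminates because $A'$ is Noetherian, and the last step, once $v(H_{B_i/A})A'=A'$ (here $A'$ local is used), is the passage to $(B_i)_g$ for a suitable $g\in H_{B_i/A}$, which is smooth over $A$ and, as a localization, trivially smooth over $B_i$. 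The crucial additional claim — which is exactly the idea of \cite[Theorem 2.3]{AD} — is that each elementary step $t_{i+1}\colon B_i\to B_{i+1}$ can be arranged to be smooth over $B_i$ at every prime of $B_{i+1}$ lying over $D_i$. This should hold because the Néron moves only repair $B_i$ along its singular locus: adjoining the new variables governed by a minor $\Delta$ of a Jacobian matrix together with the relations expressing that $v$ is a solution after dividing by a power of $v(\Delta)$ produces an algebra which, after inverting $\Delta\in H_{B_i/A}$, is a polynomial or étale extension of $B_i$; and the auxiliary presentation-shrinking and the base changes $A\to A_1$ through which $v$ is made to factor can be taken étale or smooth over $D_i$.

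Granting this claim, the theorem follows by a routine composition argument. Since a composite of smooth morphisms is smooth and $B_{i-1}$ is smooth over $A$ along $D_{i-1}$, the preimage of $D_{i-1}$ in $\Spec B_i$ lies in $D_i$; iterating, for a prime $Q$ of $C$ with $t^*(Q)\in D$ one gets at each stage that the image $Q_i$ of $Q$ in $\Spec B_i$ lies in $D_i$ and that $(B_{i+1})_{Q_{i+1}}$ is smooth over $(B_i)_{Q_i}$, and composing these localizations yields that $C_Q$ is smooth over $B_{t^*(Q)}$. Thus $C$ is smooth over $B$ at $(t^*)^{-1}(D)$, while $C$ is smooth over $A$ because it is the $A$-smooth algebra produced by General Néron Desingularization of $v$. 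Setting $t=t_n\cdots t_1$ and letting $w\colon C\to A'$ be the induced factorization of $v$ finishes the argument.

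The main obstacle is the claim above, namely verifying case by case, inside the proof of Theorem \ref{gnd}, that the elementary operations can be carried out without disturbing the part of $\Spec B_i$ already smooth over $A$. Concretely, one must revisit the small-presentation reduction, the case analysis according to the values of $v$ on the various Jacobian minors, and the explicit Néron desingularization move, and check in each instance that the elements inverted may be chosen in $H_{B_i/A}$ — equivalently, that the modification is confined to $\Spec B_i\setminus D_i$ — and that the étale or smooth base changes employed introduce no new singularities over $D_i$. Everything else is the bookkeeping indicated above.
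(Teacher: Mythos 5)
The paper does not actually prove Theorem~\ref{cp}: it is stated as a result of Cipu--Popescu and the introduction only records that the proof in \cite{CP} ``uses an idea of \cite[Theorem 2.3]{AD}.'' So there is no in-paper argument to compare yours with; the best we can do is measure your outline against the cited strategy, and against the standard of a complete proof.

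Your high-level plan is the right one and matches the attribution: rerun General N\'eron Desingularization while tracking the smooth locus, using the Artin--Denef smoothing-along-a-section idea to arrange that each elementary move $B_i\to B_{i+1}$ is smooth over $B_i$ on the part of $\Spec B_{i+1}$ lying over $D_i$, and then compose. The reduction you make at the start (it suffices to have $C$ smooth over $B$ on $(t^*)^{-1}(D)$ and over $A$ on $(t^*)^{-1}(\Sigma)$) is correct, and the induction/composition bookkeeping in your third paragraph is sound once the crucial claim is granted.

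The problem is that you do not prove the crucial claim, and you say so explicitly (``The main obstacle is the claim above\ldots''). Everything in the statement lives there: one must check, for each of the elementary operations occurring in the GND proof --- the passage to a small presentation, the case analysis on where $v$ sends the Jacobian minors, the N\'eron blow-up move adjoining new variables, and the auxiliary base changes through intermediate rings --- that the localizations performed can be taken at elements of $H_{B_i/A}$ and that the algebras introduced are \'etale or smooth over $B_i$ wherever $B_i$ is already $A$-smooth. You offer a plausibility remark (``the N\'eron moves only repair $B_i$ along its singular locus'') and then defer the verification. You also slide past the reduction to $A'$ complete local: in several expositions of GND that step replaces $B$ as well as $A'$ (passing to a subalgebra or a base-changed algebra through which $v$ factors), so the assertion ``these reductions do not affect $D$, which depends only on $B$'' needs justification rather than being immediate. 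In short: correct strategy and correct outer bookkeeping, but the technical heart --- precisely what \cite{CP} had to carry out --- is asserted, not proved. As submitted, this is a plan of attack, not a proof.
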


 Another type of General Neron Desingularization is the following theorem.

\begin{Theorem} (Popescu \cite{P2})\label{p2} Let $(A,m)$ be a Noetherian local ring with the completion map $A\to \hat A$ regular. Then for every finite type $A$-algebra $B$ there exists a function $\lambda:\bf N\to \bf N$ such that for every positive integer $c$ and every morphism $v:B\to A/m^{\lambda(c)}$ there exists a smooth $A$-algebra $C$ and two $A$-algebra morphisms $t:B\to C$, $w:C\to A/m^c$ such that   $wt$ is the composite map $B\xrightarrow{v} A/m^{\lambda(c)}\to A/m^c$.
\end{Theorem}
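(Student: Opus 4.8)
The plan is to first make a reduction, and then to recognize what remains as a strong (Greenberg–style) form of Artin approximation for $\hat A$. Since $A/m^{i}\cong\hat A/m^{i}\hat A$ canonically, I work inside $\hat A$ and write $\hat m=m\hat A$. The first step is the following equivalence, for an $A$-morphism $\bar v\colon B\to A/m^{c}$ with $c\ge 1$: (a) there are a smooth $A$-algebra $C$ and $A$-morphisms $t\colon B\to C$, $w\colon C\to A/m^{c}$ with $wt=\bar v$; (b) $\bar v$ lifts to an $A$-morphism $B\to\hat A$. For (a)$\Rightarrow$(b) I would use that the surjections $A/m^{i+1}\to A/m^{i}$ have square-zero kernel for $i\ge 1$ (because $m^{2i}\subseteq m^{i+1}$), so formal smoothness of $C$ over $A$ lets one lift $w$ step by step to compatible $A$-morphisms $C\to A/m^{i}$, hence to $C\to\varprojlim_{i}A/m^{i}=\hat A$; composing with $t$ lifts $\bar v$. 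For (b)$\Rightarrow$(a): a lift $\hat v\colon B\to\hat A$ factors, by Theorem \ref{gnd} applied to the regular morphism $A\to\hat A$, as $B\xrightarrow{t}C\xrightarrow{\hat w}\hat A$ with $C$ smooth over $A$, and reducing $\hat w$ modulo $\hat m^{c}$ gives (a).

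Granting this, Theorem \ref{p2} becomes equivalent to: there is a function $\lambda$ such that for every $c$ and every $A$-morphism $v\colon B\to A/m^{\lambda(c)}$ the composite $B\to A/m^{c}$ lifts to $\hat A$. Writing $B=A[Y]/(f)$ with $Y=(Y_{1},\dots,Y_{N})$ and $f=(f_{1},\dots,f_{r})$, an $A$-morphism $B\to A/m^{n}$ is a tuple $\bar y\in(A/m^{n})^{N}$ with $f(\bar y)=0$; choosing any lift $y\in\hat A^{N}$ one gets $f(y)\in\hat m^{\,n}$, and the composite $B\to A/m^{c}$ lifts to $\hat A$ exactly when some $z\in\hat A^{N}$ satisfies $f(z)=0$ and $z\equiv y\pmod{\hat m^{\,c}}$. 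So the whole statement reduces to producing a \emph{Greenberg function} for the system $f$ over $\hat A$: a function $\lambda$ such that $f(y)\in\hat m^{\,\lambda(c)}$ (with $y\in\hat A^{N}$) forces a genuine solution of $f=0$ in $\hat A$ congruent to $y$ modulo $\hat m^{\,c}$. One then takes $\lambda(c)$ minimal.

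The remaining point is that such a Greenberg function exists, i.e. that the complete local ring $\hat A$ — which is excellent and Henselian — has the strong Artin approximation property; this is the heart of the matter. I would obtain it by the usual ultraproduct/compactness route: if the conclusion failed at some level $c$, choose $y_{n}\in\hat A^{N}$ with $f(y_{n})\in\hat m^{\,n}$ admitting no solution of $f=0$ congruent to $y_{n}$ modulo $\hat m^{\,c}$; for a non-principal ultrafilter $\mathcal U$ on $\mathbf N$ pass to the quotient $R$ of the ultrapower $\prod_{\mathcal U}\hat A$ by $\bigcap_{k}\hat m^{\,k}\bigl(\prod_{\mathcal U}\hat A\bigr)$, in which the class of $(y_{n})_{n}$ already satisfies $f=0$; one checks that $R$ is again a Noetherian local ring which is excellent and Henselian, so that ordinary Artin approximation (Theorem \ref{main}) is available for $R$ and, threaded back through $\mathcal U$, yields for almost all $n$ a true solution of $f=0$ over $\hat A$ congruent to $y_{n}$ modulo $\hat m^{\,c}$, a contradiction. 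The main obstacle here is exactly the uniformity in $v$: the delicate steps are verifying that the ultrapower quotient $R$ is Noetherian (using that $\hat m$ is finitely generated) and that it inherits excellence, so that General Néron Desingularization can be performed simultaneously for infinitely many $v$'s. An alternative to the ultraproduct step — probably closer to the original argument — is to inspect the Néron–Popescu desingularization procedure itself and bound the number of elementary steps by a function of the data via Noetherian induction on the non-smooth locus of $B$ over $A$, which produces $\lambda$ directly; or one may simply invoke the strong Artin approximation theorem for excellent Henselian local rings from the literature.
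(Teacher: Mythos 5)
Your reduction is correct and, in fact, gives a sharper result than the surrounding text of the paper records. The equivalence in your first paragraph is exactly right: for an $A$-morphism $\bar v\colon B\to A/m^{c}$ with $c\ge1$, factoring $\bar v$ through a smooth $A$-algebra is the same as lifting $\bar v$ to an $A$-morphism $B\to\hat A$; direction (a)$\Rightarrow$(b) uses only formal smoothness of $C$ together with the fact that each surjection $A/m^{i+1}\to A/m^{i}$ has square-zero kernel for $i\ge 1$, and (b)$\Rightarrow$(a) is an immediate application of Theorem \ref{gnd} to the regular map $A\to\hat A$. This turns Theorem \ref{p2} into the assertion that $\hat A$ has the strong Artin approximation property for the system $f$ defining $B$, and one may then take $\lambda=\nu$, the Artin function of $f$ over $\hat A$. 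The paper itself gives no proof of Theorem \ref{p2} (it cites \cite{P2}), but it records only the one--sided comparison $\lambda\ge\nu$ in Corollary \ref{corm} (whose proof is precisely your direction (a)$\Rightarrow$(b), phrased as a lifting via the Implicit Function Theorem), and the remark before it stresses that the $\lambda$ produced in \cite{P2} is not known to be computable. This strongly suggests that the original proof runs the desingularization machinery directly at the finite levels $A/m^{i}$ rather than reducing to strong approximation over $\hat A$; your route buys a cleaner proof and the optimal $\lambda$, while the route of \cite{P2} does not presuppose strong Artin approximation. The one place you should not attempt to re-derive from scratch is strong Artin approximation for $\hat A$: the ultraproduct argument you outline is standard, but verifying that $R=\bigl(\prod_{\mathcal U}\hat A\bigr)\big/\bigcap_{k}\hat m^{k}\bigl(\prod_{\mathcal U}\hat A\bigr)$ is Noetherian and, more seriously, excellent so that Theorem \ref{main} applies, is genuinely delicate---this is exactly the content of the references (\cite{PP}, \cite{A1}, \cite{K}) listed in the Remark after Theorem \ref{p2}---so it is better simply to invoke that theorem than to sketch its proof.
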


\begin{Remark}{\em The proof from \cite{P2}  is not constructive and so it is not clear that $\lambda$ can be computed. On the other hand $\hat A$ has the so called the  strong Artin approximation property (see \cite{A1}, \cite{PP}, \cite{K}, \cite{P3}, \cite{P1}). Suppose that $B=A[Y]/(f)$, $Y=(Y_1,\ldots,Y_n)$, $f=(f_1,\ldots,f_r)\in A[Y]^r$ and let $\nu$ be  the Artin function  associated to $f$ considered over $\hat A$. Next corollary relates somehow $\lambda $ with $\nu$. }
\end{Remark}

\begin{Corollary} \label{corm} Let $(A,m) $ be a Noetherian local ring with the completion map $A\to \hat A$ regular, $c\in \bf N$, $B$ a finite type $A$-algebra  and $\lambda$ its function defined by Theorem \ref{p2}. Let $g:B\to {\hat A}/m^{\lambda(c)}\hat A$ be an $A$-morphism. Then  there exists a   $B$-algebra $C$, which is smooth over $A$, and  an $A$-morphism $w:C\to \hat A$ such that the composite map $B\to C\xrightarrow{w} \hat A$ coincides with $g$ modulo $m^c\hat A$. In particular, $\lambda\geq \nu$, the last map being the Artin function over $\hat A$ associated to the system of polynomials $f$ defining $B$. Moreover,    for any $A$-morphism $v:B\to \hat A$ congruent  with $g$ modulo $m^{\lambda(c)}\hat A$,  there exists an $A$-morphism $w:C\to \hat A$ such that the composite map $B\to C\xrightarrow{w} \hat A$ coincides with $v$ modulo $m^c\hat A$.
\end{Corollary}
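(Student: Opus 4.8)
The plan is to feed $g$ into Theorem \ref{p2} and then lift the resulting truncated morphism to $\hat A$ by an infinitesimal lifting argument. First I would record that, $A$ being Noetherian, $\hat A$ is faithfully flat over $A$ with $m^n\hat A\cap A=m^n$, so $\hat A/m^n\hat A\cong A/m^n$ as $A$-algebras for every $n$; in particular $g$ may be read as an $A$-morphism $B\to A/m^{\lambda(c)}$. Applying Theorem \ref{p2} to this $g$ (with our $c$) produces a smooth $A$-algebra $C$ together with $A$-morphisms $t:B\to C$ and $w_0:C\to A/m^c$ such that $w_0t$ is the composite $B\xrightarrow{g}A/m^{\lambda(c)}\to A/m^c$. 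Viewing $C$ as a $B$-algebra via $t$, this already supplies the $B$-algebra $C$, smooth over $A$, that the statement asks for.

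Next I would lift $w_0$, now regarded as a map $C\to\hat A/m^c\hat A$, to an $A$-morphism $w:C\to\hat A$. For each $k\ge 0$ the surjection $\hat A/m^{c+k+1}\hat A\to\hat A/m^{c+k}\hat A$ has square-zero kernel, so by the formal smoothness of $C$ over $A$ one lifts $w_0$ successively through the tower $(\hat A/m^{c+k}\hat A)_{k\ge 0}$, each new lift being automatically compatible with the previous one; since $\hat A$ is $m$-adically complete, i.e.\ the inverse limit of the $\hat A/m^{c+k}\hat A$, these lifts assemble into an $A$-morphism $w:C\to\hat A$ reducing to $w_0$ modulo $m^c\hat A$. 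Then $B\xrightarrow{t}C\xrightarrow{w}\hat A$ reduces modulo $m^c\hat A$ to $w_0t$, which is $g$ followed by $A/m^{\lambda(c)}\to A/m^c\cong\hat A/m^c\hat A$; hence $wt$ coincides with $g$ modulo $m^c\hat A$, proving the first assertion. The step I expect to carry the weight is precisely this lifting: it works only because Theorem \ref{p2} delivers a $C$ that is smooth over $A$ itself, not merely over the truncations $A/m^c$ --- and this is also the point at which the hypothesis that $A\to\hat A$ be regular is used, through Theorem \ref{p2}.

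For the inequality $\lambda\ge\nu$, write $B=A[Y]/(f)$ with $Y=(Y_1,\ldots,Y_N)$, and take any $\bar y\in\hat A^N$ with $f(\bar y)\equiv 0$ modulo $m^{\lambda(c)}\hat A$. Sending $Y$ to $\bar y$ induces an $A$-morphism $g:B\to\hat A/m^{\lambda(c)}\hat A$, and the $w$ constructed above yields $y:=wt(Y+(f))\in\hat A^N$ with $f(y)=0$ in $\hat A$ and $y\equiv\bar y$ modulo $m^c\hat A$. Thus $\lambda$ satisfies the defining property of an Artin function for $f$ over $\hat A$, and therefore $\lambda(c)\ge\nu(c)$ for every $c$, $\nu$ being the smallest such function.

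For the final assertion, note that necessarily $\lambda(c)\ge c$ (the composite $A/m^{\lambda(c)}\to A/m^c$ in Theorem \ref{p2} requires it), so any $A$-morphism $v:B\to\hat A$ congruent with $g$ modulo $m^{\lambda(c)}\hat A$ is also congruent with $g$ modulo $m^c\hat A$; hence the reduction of $v$ modulo $m^c\hat A$ is exactly $w_0t$, and the same $w:C\to\hat A$ from the second paragraph makes $B\xrightarrow{t}C\xrightarrow{w}\hat A$ coincide with $v$ modulo $m^c\hat A$. Beyond keeping the identifications $\hat A/m^n\hat A\cong A/m^n$ straight, the only care needed is in choosing the lifts compatibly along the tower, which is routine since each lifting problem is already compatible with the preceding stage.
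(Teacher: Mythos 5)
Your proof is correct and follows essentially the same route as the paper: apply Theorem \ref{p2} to $g$ (using $\hat A/m^n\hat A\cong A/m^n$), then lift the resulting truncated map on the smooth algebra $C$ to $\hat A$ — the paper calls this the Implicit Function Theorem, you spell it out as formal smoothness through the square-zero tower plus completeness, but it is the same lifting. You also supply the short arguments for $\lambda\ge\nu$ and for the ``Moreover'' clause (via $\lambda(c)\ge c$ so that $v\equiv g$ already modulo $m^c$), which the paper leaves implicit; these are fine.
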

\begin{proof} Let $\lambda$ be given by Theorem \ref{p2}. Then  there exists a $B$-algebra $C$ which is smooth over $A$ and an $A$-morphism  ${\tilde w}:C\to A/m^c$ such that the composite map $B\to C\xrightarrow{\tilde w} {\hat A}/m^c{\hat A}$ coincides with $g$ modulo $m^c$. By the Implicit Function Theorem $\tilde w$ can be lifted to an $A$-morphism $w:C\to \hat A$. Therefore,  $g$  coincides with the composite map $B\to C\xrightarrow{w} \hat A$ modulo $m^c\hat A$.
\hfill\ \end{proof}
\begin{Remark} {\em It will be nice  to get above that  $v$ coincides with the composite map  $B\to C\xrightarrow{w} A'$ because then  $C$ could be chosen   independently  on $v$ but depending  on $g$ (clearly, $C$ can be replaced for example by a polynomial algebra over $C$ and so it is not unique). This is possible  in a special case (see Theorem \ref{arc}).}
\end{Remark}

\section{A special case of Corollary \ref{corm}}

Let  $A$ be a discrete valuation ring, $x$ a local parameter of $A$,  $A'=\hat A$ its completion and  $B=A[Y]/I$, $Y=(Y_1,\ldots,Y_n)$ a finite type $A$-algebra.
  If $f=(f_1,\ldots,f_r)$, $r\leq n$ is a system of polynomials from $I$ then we consider a $r\times r$-minor $M$ of the Jacobian matrix $(\partial f_i/\partial Y_j)$.  Let  $c\in \bf N$. Suppose that there exist  an $A$-morphism $v:B\to A'/(x^{2c+1})$   and  $N\in ((f):I)$ such that
 $v(NM)\not\in (x)^c/(x^{2c+1})$, where for simplicity we write $v(NM)$ instead $v(NM+I)$. We may assume that $M=\det((\partial f_i/\partial Y_j)_{i,j\in [r]})$.

\begin{Theorem}\label{arc} Then there exists a $B$-algebra $C$ which is smooth over $A$ such that every $A$-morphism $v':B\to A'$ with $v'\equiv v \ \mbox{modulo}\ x^{2c+1}$ (that is $v'(Y)\equiv v(Y) \ \mbox{modulo}\\
 x^{2c+1}$) factors through $C$.
\end{Theorem}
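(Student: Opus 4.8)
The plan is to feed the data of $v$ into the Néron desingularization step for a discrete valuation ring (the case of Theorem \ref{gnd} due to \cite{N}, \cite{A1}) and to observe that the smooth $A$-algebra it produces depends only on $v$ modulo $x^{2c+1}$, hence serves every lifting of $v$ simultaneously.

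\emph{Setting up.} Since $A'=\hat A$ we have $A'/(x^{2c+1})=A/(x^{2c+1})$, so I lift $v(Y)$ to a tuple $y\in A^n$. Then $g(y)\in(x^{2c+1})A$ for every $g\in I$, in particular $f_i(y)\in(x^{2c+1})$. As $v(NM)\notin (x)^c/(x^{2c+1})$ and $\bigcap_k(x^k)A=0$, the element $N(y)M(y)$ is nonzero with $\ord_x\big(N(y)M(y)\big)=e<c$; put $d=\ord_x M(y)$, so $d\le e<c$, and in particular $2c+1>2d+1$. Finally, every $A$-morphism $v'\colon B\to A'$ with $v'\equiv v$ modulo $x^{2c+1}$ has $\bar y:=v'(Y)\equiv y$ modulo $x^{2c+1}A'$, satisfies $f(\bar y)=0$ since the $f_i$ lie in $I$, and (as $d<c<2c+1$) has $\ord_x M(\bar y)=d$. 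Thus every lifting of $v$ is a genuine solution of $f=0$ in $A'$ congruent to the fixed approximate solution $y$ modulo $x^{2c+1}$, with $M$ of fixed order $d<c$ on it.

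\emph{The algebra $C$.} Using the normalization $M=\det\big((\partial f_i/\partial Y_j)_{i,j\in[r]}\big)$, split $Y=(Y',Y'')$ with $Y'=(Y_1,\dots,Y_r)$ and run the Néron desingularization step over the discrete valuation ring $A$ (as in \cite{N}, \cite{A1}, \cite{S}, \cite{P3}; carried out explicitly, e.g., in \cite{AP}) for the triple $(f,M,N)$ relative to $y$: one adjoins finitely many new variables, uses the adjugate of $(\partial f_i/\partial Y_j)_{i,j\in[r]}$, and divides the relations obtained by substituting into the $f_i$ by suitable powers of $x$ — the inequality $d<c$, equivalently $f(y)\in(x^{2c+1})\subseteq(x^{2d+1})=M(y)^2(x)$, being exactly the margin $2(c-d)\ge2$ that legitimizes these divisions. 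The outcome is a finitely presented $A$-algebra $C$ whose defining relations have an invertible maximal minor in their Jacobian, hence $C$ is smooth over $A$ of relative dimension $n-r$, together with an $A$-morphism $\tau\colon B\to C$ — here $N\in((f):I)$ is what forces all of $I$, not merely $f$, to vanish in $C$. Crucially, $C$ and $\tau$ are manufactured from $f$, $M$, $N$, $c$ and the chosen lift $y$ alone, i.e. from $v$ modulo $x^{2c+1}$, and not from any particular $v'$.

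\emph{Universality.} The Néron step yields more: any solution $\bar y$ of $f=0$ in $A'$ lying $x$-adically close enough to $y$ — in particular any $\bar y$ with $\bar y\equiv y$ modulo $x^{2c+1}A'$ — is realized by an $A$-morphism $w\colon C\to A'$ for which $w\tau$ sends $Y$ to $\bar y$; concretely the new variables of $C$ acquire well-defined values in $A'$ because the powers of $x$ occurring in their defining expressions divide exactly as soon as $\bar y$ is an honest solution congruent to $y$ to order $2c+1$ (completeness of $A'$, i.e. Tougeron's lemma, is what matches the solution near $y$ with the one cut out by $C$). Applying this to $\bar y=v'(Y)$ for an arbitrary lifting $v'$ gives $w\colon C\to A'$ with $w\tau(Y)=v'(Y)$, hence $w\tau=v'$ on $B=A[Y]/I$; so $v'$ factors through $C$, as required.

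\emph{Main obstacle.} None of the ingredients — the Néron step, smoothness of $C$, the morphism $\tau$ via $N\in((f):I)$, lifting a true solution to a $C$-point — is new in itself; the task is to combine them so that one and the same $C$ works for every $v'$. This needs careful bookkeeping of $x$-orders: the desingularization is carried out once, relative to the approximation $y$ extracted from $v$, and one must check that the single smooth family $C$ it produces contains every exact solution of $f=0$ congruent to $y$ modulo $x^{2c+1}$. The hypothesis $v(NM)\notin(x)^c/(x^{2c+1})$, which provides $d\le e<c$, is precisely the slack that makes this work.
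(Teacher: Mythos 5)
Your proposal follows essentially the same route as the paper: lift $v(Y)$ to $y\in A^n$, run the explicit Néron-type desingularization step for $(f,M,N)$ centered at $y$ (bordered Jacobian, adjugate, division by the square of the distinguished element, with $N\in((f):I)$ absorbing $I$ into the new relations), and observe that the resulting smooth $A$-algebra $C$ and map $B\to C$ depend only on $y$ and $c$, so every lifting $v'$ with $v'\equiv v$ modulo $x^{2c+1}$ factors through the single $C$. The paper simply carries out that step explicitly—$H$, $G=NG'$, $h=Y-y'-dG(y')T$, $g=a+T+Q$, $E=A[Y,T]/(I,g,h)$, $C=E_{ss'}$ with $d=NM(y')$ normalized to $\ord_x d=c$, and the verification that $t=H(y')\epsilon$ (where $v'(Y)-y'=d^2\epsilon$) gives a $C$-point—which is exactly the "universality" you assert and leave to the cited sources.
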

\begin{proof} We follow somehow the proof of \cite[Theorem 3]{AP} given in a more general assumptions.
Since $A/(x^{2c+1})\cong A'/(x^{2c+1})$ we may choose $y'\in A^n$, such that $v(Y)=y' + (x^{2c+1})$.
Set $P=NM$ and $d=P(y')$. Multiplying $N$ by a certain power of $x$ we may suppose that $dA=x^cA$.

 Let  $H$ be the $n\times n$-matrix obtained adding down to $(\partial f/\partial Y)$ as a border the block $(0|\mbox{Id}_{n-r})$ (we assume as above that $M$ is given on the first $r$ columns of the Jacobian matrix). Let $G'$ be the adjoint matrix of $H$ and $G=NG'$. We have
$$GH=HG=NM \mbox{Id}_n=P\mbox{Id}_n$$
and so
$$d\mbox{Id}_n=P(y')\mbox{Id}_n=G(y')H(y').$$

 Let
 $$h=Y-y'-dG(y'))T,$$
 where  $T=(T_1,\ldots,T_n)$ are new variables.  Since
$$Y-y'\equiv dG(y')T\ \mbox{modulo}\ h$$
and
$$f(Y)-f(y')\equiv \sum_j\partial f/\partial Y_j(y') (Y_j-y'_j)$$
modulo higher order terms in $Y_j-y'_j$ by Taylor's formula we see that  we have
$$f(Y)-f(y')\equiv  \sum_jd\partial f/\partial Y_j(y') G_j(y')T_j+d^2Q=$$
$$dP(y')T+d^2Q=d^2(T+Q)\
\mbox{modulo}\ h,$$
 where $Q\in T^2 A[T]^r$. This is because $(\partial f/\partial Y)G=(P\mbox{Id}_r|0)$.  We have $f(y')=d^2a$ for some $a\in xA^r$. Set
$g_i=a_i+T_i+Q_i$, $i\in [r]$ and  $E=A[Y,T]/(I,g,h)$. Clearly, it holds $d^2g\subset (f,h)$ and $(f)\subset (h,g)$ .

Note that $U=(A[T]/(g))_{s}$ is smooth for some $s\in 1+(T)$ because the $r\times r$-minor of the Jacobian matrix $(\partial g/\partial T)$ given by the first $r$ columns has the form $\det(\mbox{Id}_r+(\partial Q_i/\partial T_j)_{i,j\in [r]})$. In particular, $U$ is flat over $A$ and so $d$ is regular in $U$.
We claim that $E_{ss'}\cong U_{s'}$ for some $s'\in 1+(x,T)$. It will be enough to show that $I\subset (h,g)A[Y,T]_{ss'}$. We have $PI\subset (f)\subset (h,g)$ and so $P(y'+dL)I\subset (h,g)$ where $L=G(y')T$. But $P(y'+dL)$ has the form $ds'$ for some $s'\in 1+(d,T)$. It follows that $s'I\subset (h,g)A[Y,T]_{s}$ since $d$ is regular in $U$ and so $I\subset  (h,g)A[Y,T]_{ss'}$.
Thus  $C=E_{ss'}$ is a $B$-algebra  smooth over $A$.

Remains to see that an arbitrary $A$-morphism $v':B\to A'$ with $v'\equiv v\ \mbox{modulo} \ x^{2c+1}$  factors through $C$. We have $v'(Y)\equiv v(Y)\equiv y'\
\mbox{modulo}\ x^{2c+1}$ and so there exists $\epsilon\in xA'^n$ such that $v'(Y)-y'=d^2\epsilon$.
 Then $t:=H(y')\epsilon\in xA'^n$
satisfies
$$G(y')t=P(y')\epsilon=d\epsilon$$
 and so
 $$v'(Y)- y'=dG(y')t,$$
that is $h(v'(Y),t)=0$.
Note that $d^2g(t)\in (h(v'(Y)),t), f(v'(Y)))=(0)$
and it follows that $g(t)=0$ since $A'$ is a domain. Thus $v'$ factors through $E$, that  is $v'$ is a composite map $B\to E\xrightarrow{\alpha} A'$, where $\alpha$ is a $B$-morphism given by $T\to t$.
 As $\alpha(s)\equiv 1$ modulo $x$ and $\alpha(s')\equiv 1$ modulo $(x,t)$, $t\in xA'$ we see that $\alpha(s),\alpha(s')$ are invertible because  $A'$ is local  and so $\alpha$ (thus $v'$) factors through the standard smooth $A$-algebra $C$.
\hfill\ \end{proof}

\begin{Remark} \label{gr} {\em In  Theorem \ref{arc} always we get an $A$-morphism $v'':B\to A'$ such that $v''\equiv v \ \mbox{modulo}\ m$ using a theorem of type Greenberg (see \cite{Gr} and \cite[Theorem 18]{AP}) but $\{v'\in \Hom_A(B,A'):v'\equiv v \ \mbox{modulo}\ x^{2c+1}\}$ could be empty (here $\Hom_A(B,A')$ denotes the set of all $A$-morphisms $B\to A'$).}
\end{Remark}

\begin{Corollary}\label{c1} In the assumptions and notations of the above theorem, let $\rho:B\to C$ be the structural algebra map. Then $\rho$ induces a bijection $\rho^*$ between $\{w\in \Hom_A(C,A'):w\rho\equiv v \ \mbox{modulo}\ x^{2c+1}\}$ and
 $\{v'\in \Hom_A(B,A'):v'\equiv v \ \mbox{modulo}\ x^{2c+1}\}$ given by $\rho^*(w)=w\rho$.
 \end{Corollary}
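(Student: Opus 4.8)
The plan is to show that $\rho^*$, which sends $w\mapsto w\rho$, is both well-defined and bijective as a map from $\Hom^{(c)}_A(C,A'):=\{w\in\Hom_A(C,A'):w\rho\equiv v\ \mbox{modulo}\ x^{2c+1}\}$ to $\Hom^{(c)}_A(B,A'):=\{v'\in\Hom_A(B,A'):v'\equiv v\ \mbox{modulo}\ x^{2c+1}\}$. Well-definedness is immediate: if $w\in\Hom^{(c)}_A(C,A')$ then $w\rho$ is an $A$-morphism $B\to A'$ congruent to $v$ modulo $x^{2c+1}$ by definition of the source set. Surjectivity is exactly the content of Theorem \ref{arc}: given $v'\in\Hom^{(c)}_A(B,A')$, that theorem produces a factorization $v'=w\rho$ through $C$, and then $w\rho=v'\equiv v\ \mbox{modulo}\ x^{2c+1}$, so $w$ lies in the source set and $\rho^*(w)=v'$.

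The remaining point is injectivity, and this is where the argument has to use the explicit construction of $C$ in the proof of Theorem \ref{arc} rather than any abstract property. Recall $C=E_{ss'}$ with $E=A[Y,T]/(I,g,h)$ and $h=Y-y'-dG(y')T$, so $C$ is generated over $B$ by the images of the coordinates $T=(T_1,\ldots,T_n)$. Hence an $A$-morphism $w:C\to A'$ is completely determined by the pair $(w\rho, w(T))$, i.e. once we know $w\rho$ the only freedom left is the vector $t=w(T)\in A'^n$. So suppose $w_1,w_2\in\Hom^{(c)}_A(C,A')$ with $w_1\rho=w_2\rho=:v'$. Writing $t_i=w_i(T)$ for $i=1,2$, the relation $h=0$ in $C$ forces $v'(Y)-y'=dG(y')t_i$ for both $i$, hence $dG(y')(t_1-t_2)=0$ in $A'$. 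Multiplying on the left by $H(y')$ and using $H(y')G(y')=d\,\mbox{Id}_n$ (which follows from $GH=HG=P\,\mbox{Id}_n$ with $P(y')=d$) gives $d^2(t_1-t_2)=0$; since $A'$ is a domain and $d\neq 0$ (indeed $dA=x^cA$), we conclude $t_1=t_2$, hence $w_1=w_2$.

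I would then assemble these three observations — well-definedness, surjectivity from Theorem \ref{arc}, and the injectivity computation above — into the short proof. The only step with any subtlety is injectivity, and the key is recognizing that the generator relation $h=0$ pins down $T$ rigidly from its image under $\rho$ because the matrix $dG(y')$ has a left inverse up to the nonzerodivisor $d^2$ coming from $H(y')$; everything else is formal. Note that when both source and target sets are empty the statement is vacuous, which is consistent with Remark \ref{gr}.

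\begin{proof}
The map $\rho^*$ is well-defined: if $w\in\Hom_A(C,A')$ satisfies $w\rho\equiv v\ \mbox{modulo}\ x^{2c+1}$, then $w\rho$ is an $A$-morphism $B\to A'$ lying in the target set. Surjectivity is Theorem \ref{arc}: given $v'\in\Hom_A(B,A')$ with $v'\equiv v\ \mbox{modulo}\ x^{2c+1}$, that theorem gives a factorization $v'=w\rho$ with $w\in\Hom_A(C,A')$, and then $w\rho=v'\equiv v\ \mbox{modulo}\ x^{2c+1}$, so $w$ lies in the source set and $\rho^*(w)=v'$.

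For injectivity, recall from the proof of Theorem \ref{arc} that $C=E_{ss'}$ with $E=A[Y,T]/(I,g,h)$ and $h=Y-y'-dG(y')T$, so $C$ is generated over $B$ by the images of $T=(T_1,\ldots,T_n)$. Thus a morphism $w:C\to A'$ is determined by $w\rho$ together with $t:=w(T)\in A'^n$. Let $w_1,w_2\in\Hom_A(C,A')$ with $w_1\rho=w_2\rho=:v'$, and put $t_i=w_i(T)$. The relation $h=0$ gives $v'(Y)-y'=dG(y')t_i$ for $i=1,2$, hence $dG(y')(t_1-t_2)=0$ in $A'$. Multiplying on the left by $H(y')$ and using $H(y')G(y')=P(y')\mbox{Id}_n=d\,\mbox{Id}_n$ yields $d^2(t_1-t_2)=0$. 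Since $dA=x^cA$, $d$ is a nonzero element of the domain $A'$, so $t_1=t_2$ and therefore $w_1=w_2$.
\hfill\ \end{proof}
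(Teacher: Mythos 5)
Your proof is correct and takes essentially the same approach as the paper: surjectivity is quoted from Theorem \ref{arc}, and injectivity comes from applying the relation $h=0$ and left-multiplying by $H(y')$ to conclude $d^2(t_1-t_2)=0$, which forces $t_1=t_2$ in the domain $A'$. The paper compresses this by observing directly that $H(y')(Y-y')\equiv x^{2c}T$ modulo $h$, but the underlying computation is the one you spell out.
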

 \begin{proof} By the above theorem $\rho^*$ is surjective. Let $w,w'\in \Hom_A(C,A')$ be such that $w\rho=w'\rho\equiv v\ \mbox{modulo}\ x^{2c+1}$.
Since  $H(y')(Y-y')\equiv x^{2c}T\ \mbox{modulo}\ h$ by construction of $E$ we get $x^{2c}(w(T)-w'(T))=0$ and so $w|_E=w'|_E$ because $A'$ is a domain. It follows $w=w'$.
\hfill\ \end{proof}

\begin{Corollary}\label{c2} In the assumptions and notations of the above corollary, suppose that there exists an $A$-morphism ${\tilde v}:B\to A'$
 with ${\tilde v}\equiv v\ \mbox{modulo}\ x^{2c+1}$. Then
 there exists an unique $A$-morphism ${\tilde w}:C\to A'$ such that ${\tilde w}\rho={\tilde v}$.
  \end{Corollary}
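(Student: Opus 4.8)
The plan is to derive this directly from Theorem \ref{arc} and Corollary \ref{c1}, with essentially no new computation.

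First I would apply Theorem \ref{arc} with $v'=\tilde v$. By hypothesis $\tilde v\equiv v\ \mbox{modulo}\ x^{2c+1}$, so the theorem says $\tilde v$ factors through $C$, i.e. there is an $A$-morphism $\tilde w:C\to A'$ with $\tilde w\rho=\tilde v$. This settles existence.

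For uniqueness, observe that any such $\tilde w$ automatically satisfies $\tilde w\rho=\tilde v\equiv v\ \mbox{modulo}\ x^{2c+1}$, so $\tilde w$ lies in the set $\{w\in\Hom_A(C,A'):w\rho\equiv v\ \mbox{modulo}\ x^{2c+1}\}$ of Corollary \ref{c1}. That corollary asserts that $\rho^*\colon w\mapsto w\rho$ is a bijection from this set onto $\{v'\in\Hom_A(B,A'):v'\equiv v\ \mbox{modulo}\ x^{2c+1}\}$; in particular $\rho^*$ is injective. Hence if $\tilde w$ and $\tilde w'$ both satisfy $\tilde w\rho=\tilde w'\rho=\tilde v$, then $\rho^*(\tilde w)=\rho^*(\tilde w')$ forces $\tilde w=\tilde w'$.

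The only point that needs a word of care is that $\tilde w$ genuinely belongs to the indicated subset of $\Hom_A(C,A')$, i.e. that $\tilde w\rho\equiv v\ \mbox{modulo}\ x^{2c+1}$; but this is immediate from $\tilde w\rho=\tilde v$ together with $\tilde v\equiv v\ \mbox{modulo}\ x^{2c+1}$. So there is no real obstacle here: the statement is exactly ``surjectivity of $\rho^*$ from Theorem \ref{arc}'' plus ``injectivity of $\rho^*$ from Corollary \ref{c1}'', specialized to the fibre over $\tilde v$, which is nonempty by assumption.
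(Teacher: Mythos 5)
Your proof is correct and is essentially the paper's own argument, merely unpacked: the paper simply sets $\tilde w = {\rho^*}^{-1}(\tilde v)$, which bundles your existence step (surjectivity of $\rho^*$, coming from Theorem \ref{arc}) and your uniqueness step (injectivity of $\rho^*$, from Corollary \ref{c1}) into one line.
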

  For the proof take
  ${\tilde w}={\rho^*}^{-1}(\tilde v)$, where $\rho^*$ is defined in the above corollary.

 By  a Grothendieck's theorem \cite[Theorem 2.5]{S}, $C$ can be chosen of the form $(A[Z,V]/(g))_{g'h}$, where $Z=(Z_1,\ldots,Z_s)$ and $g'=\partial g/\partial V$.
\begin{Lemma}\label{lem} There  exists a canonical bijection
     $$ A'^s\to  \{w'\in \Hom_A(C,A'):w'\equiv {\tilde w} \ \mbox{modulo}\ x^{2c+1}\}.$$
   \end{Lemma}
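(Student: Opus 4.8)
The plan is to make $A$-morphisms $C\to A'$ fully explicit via the given presentation $C=(A[Z,V]/(g))_{g'h}$, $g'=\partial g/\partial V$, and then to solve for them variable by variable with Hensel's lemma in $A'$. An $A$-morphism $w'\colon C\to A'$ is exactly a pair $(z,v)\in A'^s\times A'$ with $g(z,v)=0$ and $g'(z,v),h(z,v)$ invertible in $A'$, via $w'(Z)=z$, $w'(V)=v$; under this identification the condition $w'\equiv\tilde w\ \mbox{modulo}\ x^{2c+1}$ reads $z\equiv z_0:=\tilde w(Z)$ and $v\equiv v_0:=\tilde w(V)$ modulo $x^{2c+1}$. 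Note that $\tilde w(g')$ and $\tilde w(h)$ are units in $A'$ (since $g'h$ is inverted in $C$), that $g(z_0,v_0)=\tilde w(g)=0$, and that $A'=\hat A$ is a complete discrete valuation ring with maximal ideal $xA'$, hence $x$-adically complete and a domain.

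The core step is the claim that for every $z\in A'^s$ with $z\equiv z_0\ \mbox{modulo}\ x^{2c+1}$ there is a unique $v\in A'$ with $v\equiv v_0\ \mbox{modulo}\ x^{2c+1}$ and $g(z,v)=0$. For existence, Taylor's formula in the $Z$-variables gives $g(z,v_0)\equiv g(z_0,v_0)=0\ \mbox{modulo}\ x^{2c+1}$, while $g'(z,v_0)\equiv g'(z_0,v_0)=\tilde w(g')\ \mbox{modulo}\ x^{2c+1}$ is a unit; Newton iteration starting from $v_0$ in the $x$-adically complete domain $A'$ then converges to a root $v$ of $g(z,-)$ with $v\equiv v_0\ \mbox{modulo}\ x^{2c+1}$. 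For uniqueness, if $v,v'$ are two such roots then $0=g(z,v)-g(z,v')=(v-v')q(v,v')$ with $q(v_0,v_0)=g'(z,v_0)$ invertible, so $q(v,v')$ is a unit and $v=v'$ since $A'$ is a domain. Writing $v=\phi(z)$ for this root, the point $(z,\phi(z))$ is congruent to $(z_0,v_0)$ modulo $x^{2c+1}$, so $g'(z,\phi(z))$ and $h(z,\phi(z))$ are units in $A'$, and hence $(z,\phi(z))$ defines an $A$-morphism $w'_z\colon C\to A'$ with $w'_z\equiv\tilde w\ \mbox{modulo}\ x^{2c+1}$.

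It then remains to assemble the bijection. By the uniqueness in the claim, any $w'$ with $w'\equiv\tilde w\ \mbox{modulo}\ x^{2c+1}$ coincides with $w'_z$ for $z:=w'(Z)$, so $z\mapsto w'_z$ is a bijection from $\{z\in A'^s:z\equiv z_0\ \mbox{modulo}\ x^{2c+1}\}$ onto $\{w'\in\Hom_A(C,A'):w'\equiv\tilde w\ \mbox{modulo}\ x^{2c+1}\}$. Since $A'$ is a domain and $x^{2c+1}\ne 0$, the map $\zeta\mapsto z_0+x^{2c+1}\zeta$ is a bijection $A'^s\to\{z\in A'^s:z\equiv z_0\ \mbox{modulo}\ x^{2c+1}\}$; composing with the previous one yields the asserted canonical bijection $A'^s\to\{w'\in\Hom_A(C,A'):w'\equiv\tilde w\ \mbox{modulo}\ x^{2c+1}\}$, $\zeta\mapsto w'_{z_0+x^{2c+1}\zeta}$. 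I expect the only real obstacle to be the Hensel step — upgrading a bare root to a root in the exact class $v_0+x^{2c+1}A'$ and confirming that perturbing $z$ within $z_0+x^{2c+1}A'^s$ preserves invertibility of $g'$ and $h$; the rest is bookkeeping about the presentation of $C$ and the freeness of the $Z$-variables.
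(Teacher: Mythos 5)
Your proof is correct and follows essentially the same route as the paper: pin down the $V$-coordinate uniquely in terms of the $Z$-coordinate by Hensel's lemma (the paper calls this the Implicit Function Theorem, you run explicit Newton iteration and a factorization $g(z,v)-g(z,v')=(v-v')q(v,v')$ instead), and then identify the free $Z$-coordinate with $\tilde w(Z)+x^{2c+1}A'^s\cong A'^s$. The only cosmetic difference is that you build the bijection from $A'^s$ to $\Hom_A(C,A')$, whereas the paper builds the inverse map.
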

\begin{proof} Let $w'\in \Hom_A(C,A')$ be with $w'\equiv {\tilde w}\ \mbox{modulo}\ x^{2c+1}$.
We have
$$g(w'(Z),{\tilde w}(V))\equiv g({\tilde w}(Z),{\tilde w}(V))=0\ \mbox{modulo}\ x^{2c+1},$$
$$g'(w'(Z),{\tilde w}(V))\equiv g'({\tilde w}(Z),{\tilde w}(V))\not \equiv 0\ \mbox{modulo}\ x,$$
$$h(w'(Z),{\tilde w}(V))\equiv h({\tilde w}(Z),{\tilde w}(V))\not \equiv 0\ \mbox{modulo}\ x.$$
Thus $g(w'(Z),V)=0$ has an unique solution  (namely $w'(V)$) in ${\tilde w}(V)+x^{2c+1}A'$ by the Implicit Function Theorem.
It follows that the restriction
$$\{w'\in \Hom_A(C,A'):w'\equiv {\tilde w} \ \mbox{modulo}\ x^{2c+1}\}\to $$
$$\{w''\in \Hom_A(A[Z],A'):w''\equiv {\tilde w}|_{A[Z]} \ \mbox{modulo}\ x^{2c+1}\}$$
is bijective.

On the other hand, the map
$$\{w''\in \Hom_A(A[Z],A'):w''\equiv {\tilde w}|_{A[Z]} \ \mbox{modulo}\ x^{2c+1}\}\to {\tilde w}(Z)+ x^{2c+1}A'^s$$
given by $w''\to w''(Z)$ is a bijection too. It follows that there exists a canonical bijection between
$\{w'\in \Hom_A(C,A'):w'\equiv {\tilde w} \ \mbox{modulo}\ x^{2c+1}\}$ and  ${\tilde w}(Z)+ x^{2c+1}A'^s$, the last one being in bijection with $A'^s$.
\hfill\ \end{proof}

\begin{Theorem} \label{arc1} In the assumptions and notations of Corollary \ref{c1} there exists a canonical bijection
$$A'^s\to \{v'\in \Hom_A(B,A'):v'\equiv v \ \mbox{modulo}\ x^{2c+1}\}$$
 for some $s\in \bf N$.
 \end{Theorem}
For the proof apply Corollary \ref{c1} and the above lemma.

 Let $k$ be a field and $F$  a finite type $k$-algebra, let us say $F=k[U]/J$, $U=(U_1,\ldots,U_n)$. An arc $\Spec k[[x]]\to \Spec F$ (see \cite{LJ}) is given by a $k$-morphism $F\to A'=k[[x]]$.
Assume that $H_{F/k}\not =0$ (this happens for example when $F$ is reduced and $k$ is perfect). Set $A=k[x]_{(x)}$, $B=A\otimes_kF$. Let
   $f=(f_1,\ldots,f_r)$, $r\leq n$ be a system of polynomials from $J$ and $M$ a $r\times r$-minor  of the Jacobian matrix $(\partial f_i/\partial U_j)$.  Let  $c\in \bf N$. Assume that there exist  an $A$-morphism $g:F\to A'/(x^{2c+1})$   and  $N\in ((f):J)$ such that
 $g(NM)\not\in (x)^c/(x^{2c+1})$.
 Note that $A\otimes_k-$ induces a bijection $\Hom_k(F,A')\to \Hom_A(B,A')$ by adjunction.

 \begin{Corollary}\label{arc2} The set $\{g'\in \Hom_k(F,A'):g'\equiv g \ \mbox{modulo}\ x^{2c+1}\}$ is in bijection with an affine space $A'^s$ over $A'$ for some $s\in \bf N$.
 \end{Corollary}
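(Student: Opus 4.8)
The plan is to reduce Corollary \ref{arc2} to Theorem \ref{arc1} by the adjunction translation already set up in the paragraph preceding the statement. First I would record the standing hypotheses: $A=k[x]_{(x)}$ is a discrete valuation ring with local parameter $x$, $A'=k[[x]]$ is its completion, and $B=A\otimes_k F=A[U]/J'$ where $J'=JA[U]$. The chosen system $f\subset J\subset J'$, the $r\times r$-minor $M$ of $(\partial f_i/\partial U_j)$, and the element $N\in((f):J)\subseteq((f):J')$ all make sense over $A$ as well as over $k$. Given the $A$-morphism $g\colon F\to A'/(x^{2c+1})$ (equivalently, after base change, an $A$-morphism $B\to A'/(x^{2c+1})$, which I will still call $v$), the hypothesis $g(NM)\notin(x)^c/(x^{2c+1})$ is exactly the hypothesis $v(NM)\notin(x)^c/(x^{2c+1})$ required in the setup of Section~2. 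So the data $(A,x,A',B,f,M,N,c,v)$ satisfies the blanket assumptions of Theorem \ref{arc}.

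Next I would invoke Theorem \ref{arc1}: there exist $s\in\mathbf N$ and a canonical bijection
$$A'^s\to\{v'\in\Hom_A(B,A'):v'\equiv v\ \mbox{modulo}\ x^{2c+1}\}.$$
It remains to identify the target set with the set appearing in the Corollary. Since $A=k[x]_{(x)}$ is a localization of the $k$-algebra $k[x]$ and $B=A\otimes_k F$, the functor $A\otimes_k-$ together with the universal property of base change (adjunction between restriction of scalars along $k\to A$ and $A\otimes_k-$) gives a bijection $\Hom_k(F,A')\to\Hom_A(B,A')$, natural in $A'$; this is precisely the bijection recalled just before the statement. Under this bijection $g$ corresponds to a lift of $v$ modulo $x^{2c+1}$, and more importantly two $k$-morphisms $g',g''\colon F\to A'$ agree modulo $x^{2c+1}$ if and only if the corresponding $A$-morphisms $B\to A'$ agree modulo $x^{2c+1}$ (because the comparison is made on the images of the generators $U_1,\dots,U_n$, which generate $B$ over $A$). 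Hence the adjunction bijection restricts to a bijection
$$\{g'\in\Hom_k(F,A'):g'\equiv g\ \mbox{modulo}\ x^{2c+1}\}\to\{v'\in\Hom_A(B,A'):v'\equiv v\ \mbox{modulo}\ x^{2c+1}\}.$$
Composing with the bijection from Theorem \ref{arc1} yields the desired bijection with $A'^s$.

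There is essentially no obstacle here: the proof is a bookkeeping exercise in base change. The only point requiring a moment's care is checking that the hypothesis $H_{F/k}\neq 0$, which is imposed only to guarantee that a suitable $f$, $M$ with $((f):J)$ large enough can be found (equivalently, that the smooth locus is nonempty so that the minor condition $g(NM)\notin(x)^c$ can be arranged), is exactly what feeds the standing assumption of Section~2; but since the Corollary already assumes the existence of $g$ and $N$ with $g(NM)\notin(x)^c/(x^{2c+1})$, this is given to us and nothing further is needed. Thus the proof is simply: translate via adjunction, apply Theorem \ref{arc1}.
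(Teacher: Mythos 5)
Your proof is correct and follows exactly the route the paper intends: the corollary is stated without proof, but the preceding paragraph sets up the adjunction bijection $\Hom_k(F,A')\to\Hom_A(B,A')$ precisely so that Corollary~\ref{arc2} becomes an immediate translation of Theorem~\ref{arc1}, and that is what you carry out. The bookkeeping checks you include (that $N\in((f):J)$ in $k[U]$ implies $N\in((f):JA[U])$ in $A[U]$, and that the congruence modulo $x^{2c+1}$ is preserved under adjunction because both $F/k$ and $B/A$ are generated by the same classes of the $U_i$) are the right ones.
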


\section{An application  of General Neron Desingularization to an extension of Bass-Quillen Conjecture}

Let $R[T]$, $T=(T_1,\ldots,T_n)$ be a polynomial algebra in $T$ over a Noetherian ring $R$. A finitely generated projective $R[T]$-module $M$ is {\em extended} from $R$ if there exists a finitely generated projective $R$-module $M'$ such that $M\cong R[T]\otimes_RM'$.
 An extension of Serre's Problem proved by Quillen and Suslin is the following

\begin{Conjecture} (Bass-Quillen) \label{bq} If $R$ is a regular ring then every finitely generated projective module over $R[T]$ is extended from $R$.
\end{Conjecture}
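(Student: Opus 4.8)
We outline how General N\'eron Desingularization is expected to reduce this conjecture to situations over which it is (known to be) tractable, and where the reduction runs into trouble. The plan is to combine the classical reductions of Quillen and Horrocks with General N\'eron Desingularization (Theorem \ref{gnd}), so as to push an arbitrary regular ring down to a well-understood base. First I would invoke Quillen's patching theorem: a finitely generated projective $R[T]$-module $M$ is extended from $R$ as soon as $M_{\mm}$ is extended from $R_{\mm}$ for every maximal ideal $\mm$ of $R$; since localizations of regular rings are regular, this lets us assume that $R$ is a regular \emph{local} ring, in which case ``extended from $R$'' just means ``free''. I would then apply Quillen's induction on $n$, resting on Horrocks' theorem, to reduce the assertion to a single variable: it suffices to show that every finitely generated projective $R[T]$-module over a regular local ring $R$ is free.

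If $R$ contains a field, its prime field $k$ (equal to ${\bf Q}$ or to ${\bf F}_p$) is perfect, so the structure map $k\to R$ is a regular morphism, and by the filtered-colimit form of Theorem \ref{gnd} we may write $R=\varinjlim_i R_i$ with each $R_i$ a smooth finite type $k$-algebra. As $M$ is finitely presented over $R[T]=\varinjlim_i R_i[T]$, it is the base change of a finitely generated projective $R_i[T]$-module $M_i$ for a suitable index $i$ (one enlarges $i$ so that flatness, too, descends). Each $R_i$ is regular and of finite type over a field, so Lindel's theorem yields that $M_i$ is extended from $R_i$; hence $M\cong R[T]\otimes_{R_i[T]}M_i$ is extended from $R$, and over a local ring it is therefore free. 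This settles the equicharacteristic case of the conjecture.

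It remains to treat a regular local ring $R$ of mixed characteristic $(0,p)$, and here I expect the real difficulty. The natural strategy is to replace the base field by a one-dimensional base: after passing to the completion (legitimate once one verifies that freeness over $\hat R[T]$ descends to $R[T]$, say via excellence together with a further use of Theorem \ref{gnd}), Cohen's structure theorem presents $\hat R$ over a complete coefficient discrete valuation ring $V$, the morphism $V\to\hat R$ is again regular, and Theorem \ref{gnd} exhibits $\hat R$ as a filtered colimit of smooth, hence regular, finite type $V$-algebras. One is thereby reduced to a Lindel-type theorem for regular rings essentially of finite type over a discrete valuation ring. This is exactly the point at which the argument stalls: such a statement is available only under extra hypotheses (partial results of Bhatwadekar--Rao, Roitman, and others), and the reduction can break down when the residue field of $V$ is imperfect. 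Supplying this missing mixed-characteristic ingredient is the main obstacle, and the reason the statement is still phrased as a conjecture; General N\'eron Desingularization reduces Bass--Quillen to the case of rings essentially of finite type over a discrete valuation ring, but does not by itself dispose of it.
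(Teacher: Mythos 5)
The statement you were asked about is labelled a \emph{Conjecture} in the paper, and the paper contains no proof of it: it only records the known partial results (Theorem \ref{p0}, from \cite{P0}) and then exploits them in Section 3. Your text, to its credit, does not claim to prove the conjecture either -- it is a survey of the standard reduction strategy, and it correctly locates the open part. The equicharacteristic half of your argument (Quillen patching \cite{Q} to reduce to a regular local ring, then Theorem \ref{gnd} to write $R$ as a filtered colimit of smooth finite type algebras over the perfect prime field, descend the finitely presented projective module to some $R_i[T]$, and finish with Lindel's theorem) is essentially the actual proof of case (1) of Theorem \ref{p0} in \cite{P0}, so that portion is sound and matches what the paper relies on.

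Two of your intermediate steps, however, are weaker than you present them. First, the proposed ``Quillen induction resting on Horrocks' theorem'' to reduce to one variable does not go through in this generality: after peeling off one variable the coefficient ring is $R[T_1,\ldots,T_{n-1}]$, which is regular but no longer local, and localizing at its maximal ideals takes you outside the class of rings you started with -- this circularity is precisely one of the difficulties of Conjecture \ref{bq}; fortunately the Lindel route you then follow does not need this reduction, since Lindel's theorem handles all $n$ at once. Second, the parenthetical ``legitimate once one verifies that freeness over $\hat R[T]$ descends to $R[T]$'' hides a genuinely open problem: extendedness of projective modules does not descend along $R\to\hat R$ by any routine argument (this is the subject of analytic-isomorphism results of Bhatwadekar--Rao type), and this is exactly why Theorem \ref{p0} lists ``$R$ excellent Henselian'' and ``$p\notin m^2$'' as separate hypotheses instead of deducing the general mixed characteristic case from the complete one. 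So your proposal correctly reproduces the known cases and correctly names the mixed-characteristic obstruction, but the conjecture itself remains unproved, in your write-up as in the paper.
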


If $(R,m)$ is regular local ring then the above conjecture says that every finitely generated projective module over $R[T]$ is free. For a possible proof it is enough to consider only this case  using the Quillen's Patching Theorem \cite[Theorem 1]{Q}.
A consequence of Theorem \ref{gnd} is a partial answer to a question of Swan which was the bases of the following theorem.

\begin{Theorem} (Popescu \cite{P0})\label{p0} The Bass-Quillen Conjecture holds if $R$ is a regular local ring in one of the cases:
\begin{enumerate}
\item{} $R$ contains a field,

\item{} the characteristic $p$ of $k$  is not in $m^2$,

\item{} $R$ is excellent Henselian.
\end{enumerate}
\end{Theorem}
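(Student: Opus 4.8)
The plan is to deduce the theorem from Lindel's theorem on the Bass--Quillen Conjecture for regular local rings essentially of finite type over a field (and, in its extension to a Dedekind base, over ${\bf Z}_{(p)}$), by spreading out the given projective module and applying General N\'eron Desingularization (Theorem \ref{gnd}).

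First I would pass to a universal situation. Since $R$ is local, a finitely generated projective $R[T]$-module $P$ is free as soon as it is extended from $R$; write $P=\Im(e)$ with $e\in M_N(R[T])$ idempotent, and let $r_1,\dots,r_q\in R$ be the finitely many coefficients occurring in the entries of $e$. The relation $e^2=e$ is encoded by a finite system of polynomial equations $g_1=\dots=g_s=0$ in indeterminates $Y=(Y_1,\dots,Y_q)$ with integer coefficients. For a Noetherian subring $A\subseteq R$, to be chosen, put $B=A[Y]/(g_1,\dots,g_s)$, a finite type $A$-algebra, and let $v\colon B\to R$ be the $A$-morphism $Y_i\mapsto r_i$. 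Over $B$ there is a universal idempotent $\mathcal E\in M_N(B[T])$ with $v(\mathcal E)=e$; for any $A$-algebra $t\colon B\to C$ the matrix $t(\mathcal E)$ is again idempotent, so $\tilde P:=\Im(t(\mathcal E))$ is a finitely generated projective $C[T]$-module, and if $w\colon C\to R$ satisfies $v=wt$ then $\tilde P\otimes_{C[T]}R[T]\cong\Im(w(t(\mathcal E)))=\Im(e)=P$.

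The main step is to choose $A$ together with a \emph{regular} morphism $A\to R$. Theorem \ref{gnd} then factors $v$ as $B\xrightarrow{t}C\xrightarrow{w}R$ with $C$ smooth over $A$; as $C$ is smooth over the regular ring $A$ it is itself regular, so $C_{\qq}$ with $\qq=w^{-1}(m)$ is a regular local ring essentially of finite type over $A$, and replacing $C$ by $C_{\qq}$ we may assume $C$ is local and $w$ is local. By Lindel's theorem $\tilde P$ is free over $C[T]$, and base changing the idempotent $t(\mathcal E)$ along $w$ (using $w(t(\mathcal E))=v(\mathcal E)=e$) gives $\tilde P\otimes_{C[T]}R[T]\cong P$, so $P$ is free. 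It remains to produce $A\to R$ regular. In case (1), $R$ equicharacteristic, take $A$ to be the prime field of $R$: it is perfect, $R$ is a free, hence flat, $A$-module, and a regular ring over a perfect field is geometrically regular, so $A\to R$ is regular. In case (2), $R$ of mixed characteristic $(0,p)$ with $p\notin m^2$, take $A={\bf Z}_{(p)}$: $R$ is torsion-free, hence flat, over $A$; the fibre over $(0)$ is a localization of $R$, and the fibre over $(p)$ is $R/pR$, which is regular local precisely because $p$ is part of a regular system of parameters of $R$; both fibres are geometrically regular, since ${\bf Q}$ and ${\bf F}_p$ are perfect.

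The remaining and hardest situation is case (3) when $R$ has mixed characteristic and $p\in m^2$: then $R/pR$ is singular, no field or discrete valuation ring inside $R$ maps regularly onto $R$, and the construction above does not apply directly. Here I would use that $R$ is excellent and Henselian. The completion map $R\to\hat R$ is regular and $\hat R$ is a complete regular local ring, for which the Bass--Quillen Conjecture holds --- itself a delicate point in the ramified case, but provable in the same spirit --- so $P\otimes_R\hat R[T]$ is free. It then remains to descend freeness from $\hat R[T]$ to $R[T]$, for instance by showing that a finitely generated projective $R[T]$-module is extended from $R$ if and only if its base change to $\hat R[T]$ is extended from $\hat R$; this is where the Henselian hypothesis, through Artin approximation (Theorem \ref{main}), really enters. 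The delicate point is this descent, not N\'eron desingularization itself, while the essential classical input throughout is Lindel's theorem.
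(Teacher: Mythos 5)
The paper itself does not prove this theorem; it cites it as a prior result of \cite{P0} and only observes that it rests on a consequence of Theorem~\ref{gnd} (a positive partial answer to Swan's question). So there is no in-paper argument for you to replicate; I can only compare your sketch against the standard line of \cite{P0}.

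For cases (1) and (2) your sketch is essentially the right argument, and it agrees with the approach in \cite{P0}: encode the idempotent presenting $P$ by a finite-type $A$-algebra $B$ and a point $v\colon B\to R$, pick $A$ so that $A\to R$ is regular ($A$ the prime field in (1); $A=\ZZ_{(p)}$ in (2), where the hypothesis $p\notin m^2$ is exactly what makes the fibre $R/pR$ regular), factor $v$ through a smooth $A$-algebra $C$ via Theorem~\ref{gnd}, and invoke Lindel's theorem (in its Dedekind-base form for (2)) for the regular local ring $C_{\qq}$ essentially of finite type over $A$. The only care needed, which you do take, is to localize $C$ at $\qq=w^{-1}(m)$ and to observe freeness is preserved under restriction of scalars along $w$.

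Case (3) is where your proposal has a genuine gap. As you correctly note, when $p\in m^2$ no field and no DVR inside $R$ maps regularly onto $R$, so the mechanism of (1) and (2) is unavailable. Your plan is then ``prove BQ for $\hat R$, and descend.'' The descent step—that a projective $R[T]$-module is extended from $R$ as soon as its base change to $\hat R[T]$ is extended from $\hat R$—is precisely the Swan-type consequence of Theorem~\ref{gnd} the paper alludes to, and is the comparatively routine part once GND is in hand (it does not even require the Henselian hypothesis, only regularity of $R\to\hat R$, i.e.\ excellence). The hard part, which you wave away with ``itself a delicate point in the ramified case, but provable in the same spirit,'' is exactly BQ for a complete ramified regular local ring $\hat R$: there $\ZZ_{(p)}\to\hat R$ is not regular either, so ``the same spirit'' does not produce a regular map from a base over which Lindel is available, and there is no obvious GND factorization to fall back on. Your sketch thus reduces the statement to a subcase that is not visibly easier and that your argument does not address. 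The actual proof of (3) in \cite{P0} must be consulted here; it does not simply reduce to BQ for $\hat R$ in the way you suppose, and the Henselian hypothesis enters more substantially than through a bare ``Artin approximation for descent.'' You should also invert your closing diagnosis: the descent is the step GND handles, while BQ for $\hat R$ (or whatever replaces it) is the genuinely delicate point in case (3).
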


An interesting problem is to replace in the Bass-Quillen Conjecture the polynomial algebra $R[T]$ by other $R$-algebras. A useful result in this idea is the following theorem which we found  too late to use it in \cite{P0}.

\begin{Theorem} (Vorst \cite{V})\label{v} Let $A$ be a ring, $A[X]$, $X=(X_1,\ldots,X_r)$ a polynomial algebra, $I\subset A[X]$ a monomial ideal and $B=A[X]/I$. Then every finitely generated projective $B$-module $M$ is extended  from a finitely generated projective $A$-module if for all $n\in \bf N$ every  finitely generated projective $A[T]$-module, $T=(T_1,\ldots,T_n)$ is extended from a  finitely generated projective $A$-module.
\end{Theorem}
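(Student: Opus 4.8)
The plan is to reduce to the case that $A$ is local and then to induct on the combinatorial complexity of the monomial ideal $I$, handling the irreducible pieces by a nilpotent-thickening argument and gluing them together by Milnor patching. For the reduction: fix a finitely generated projective $B$-module $M$; the discrete-Hodge-algebra analogue of Quillen's patching theorem (which applies because $B/\pp B=(A/\pp)[X]/\bar I$ is again a discrete Hodge algebra over $A/\pp$) shows that $\{\pp\in\Spec A:\ M_{\pp}\ \text{is extended from}\ A_{\pp}\}$ is open and closed, hence all of $\Spec A$, so $M$ is extended from $A$ once it is so locally. When $A$ is local a finitely generated projective $A$-module is free, so ``extended from $A$'' simply means ``free''; the goal thus becomes: if $A$ is local and satisfies the hypothesis, then every finitely generated projective $A[X_1,\dots,X_r]/I$-module is free, $I$ monomial.

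The basic tool is a \emph{nilpotent-thickening lemma}: if $J\subset D$ is a nilpotent ideal and every finitely generated projective $D/J$-module is free, then every finitely generated projective $D$-module is free. Indeed, given such a $P$, lift a basis of $P/JP$ to obtain a map $D^n\to P$ that is onto modulo $J$, hence onto (since $J$, being nil, lies in the Jacobson radical); splitting $0\to K\to D^n\to P\to 0$, the module $K/JK$ is the kernel of an epimorphism between free $D/J$-modules of rank $n$, hence zero, so $K=JK=0$.

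Next I would induct on the number $s$ of components in an irredundant irreducible decomposition $I=\qq_1\cap\dots\cap\qq_s$, with $\qq_i=(X_j^{a_j}:j\in S_i)$. If $s\le 1$: either $I=(0)$ and $B=A[X]$ is a polynomial ring (projectives free by hypothesis), or $I=(X_j^{a_j}:j\in S)$ with $S\ne\emptyset$, and then the ideal of $B$ generated by $\{X_j:j\in S\}$ is nilpotent with quotient the polynomial ring $A[X_j:j\notin S]$, so the lemma applies. If $s\ge 2$, set $I_1=\qq_1$ and $I_2=\qq_2\cap\dots\cap\qq_s$; the exact sequence $0\to A[X]/I\to A[X]/I_1\oplus A[X]/I_2\to A[X]/(I_1+I_2)\to 0$ exhibits $B$ as a cartesian square of rings with (both) maps onto $R_0:=A[X]/(I_1+I_2)$ surjective, so Milnor patching applies; and because the lattice of monomial ideals is distributive one has $I_1+I_2=\bigcap_{j\ge 2}(\qq_1+\qq_j)$, an intersection of $s-1$ irreducible monomial ideals. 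Hence $A[X]/I_1$, $A[X]/I_2$ and $R_0$ all satisfy the induction hypothesis, so their finitely generated projectives are free. Milnor patching then represents $M$ by a triple $(R_1^n,R_2^n,\alpha)$ with $\alpha\in\GL_n(R_0)$ (the two ranks agree because each $R_i^n$, restricted along $X\mapsto 0$, becomes $M/(X)M\cong A^n$), and it remains to write $\alpha$ as a product of a matrix pulled back from $\GL_n(A[X]/I_1)$ and one pulled back from $\GL_n(A[X]/I_2)$; then the triple is the trivial one and $M\cong B^n$, which is extended from $A$.

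The main obstacle is precisely this last point — controlling the gluing matrix $\alpha$, i.e.\ a $K_1$-type (elementary matrix) statement for discrete Hodge algebras. Using $R_0\to A$ ($X\mapsto 0$) to normalize, one may take $\alpha\equiv 1$ modulo the augmentation ideal $\aa$ of $R_0$; and since elementary matrices over $R_0$ lift to elementary matrices over $A[X]/I_1$ (that quotient map being surjective), it would be enough to know $\GL_n(R_0,\aa)=E_n(R_0,\aa)$ for a discrete Hodge algebra $R_0$ over a local ring satisfying the hypothesis. This does not follow formally from the $K_0$-type hypothesis alone; the honest route is to run, in parallel with the $K_0$-induction above, a matching induction — modeled on Suslin's $K_1$-analogue of Serre's problem and carried out along the very same Milnor squares and nilpotent thickenings — that establishes this $\GL_n=E_n$ statement, interlocking the two so that the $K_1$-input needed to finish each $K_0$-step is already at hand. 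Getting that bookkeeping right is where the real work lies.
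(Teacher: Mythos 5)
The paper itself does not prove this statement; it simply cites Vorst's paper \cite{V}, so there is no ``paper's own proof'' to compare against, and any proof must reproduce Vorst's argument or an equivalent.

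Your structural scaffolding is the right one and is close in spirit to Vorst's: the nilpotent--thickening lemma is correct as stated, the base case $s\le 1$ is handled correctly, the Milnor (conductor) square for $I=I_1\cap I_2$ is the right object, and the observation that $I_1+I_2=\bigcap_{j\ge 2}(\qq_1+\qq_j)$ keeps all three corner rings inside the induction. Two caveats on the reduction step, though: (a) ``the discrete--Hodge--algebra analogue of Quillen's patching theorem'' is not a routine corollary of Quillen's polynomial patching --- it is one of Vorst's own main technical results and has to be proved; and (b) to pass to $A_\mm$ you also need to know that the hypothesis on $A$ (all f.g.\ projectives over $A[T]$ extended) descends to $A_\mm$, which again uses Quillen patching in the polynomial case and should be said.

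The genuine gap is exactly the one you flag at the end, and your proposed repair cannot work. To finish the Milnor--patching step you must factor the transition matrix $\alpha\in\GL_n(R_0)$ through $\GL_n(A[X]/I_1)\times\GL_n(A[X]/I_2)$, which is a statement about $\GL_n/E_n$ (a $K_1$-type statement), whereas the hypothesis of the theorem is purely $K_0$: all f.g.\ projectives over $A[T]$ are extended. A ``parallel $K_1$-induction along the same Milnor squares'' has no base case, because the input it would require at $I=(0)$ is $\GL_n(A[T],\aa)=E_n(A[T],\aa)$ (a Bass--Quillen statement at the $K_1$ level for $A$), and that is simply not assumed --- indeed, nothing of the sort follows formally from the $K_0$ hypothesis. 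So the argument, as sketched, is circular/unstarted at precisely the point you identify as ``the real work.'' Vorst does not run a parallel $K_1$-induction; he avoids the $K_1$ obstruction by a more careful choice of the combinatorial decomposition of the monomial ideal (so that the relevant conductor squares carry a ring-theoretic splitting on one side, which makes $\GL_n(R_i)\to\GL_n(R_0)$ surjective and trivializes $\alpha$), together with his Quillen-patching theorem for discrete Hodge algebras. Peeling off a single irreducible component $\qq_1$ does not, in general, produce such a split square, so the induction variable and the precise form of the square need to be reworked, not just supplemented with a $K_1$ statement.
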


Using the above theorem and Corollary \ref{p0} we get the following theorem.
\begin{Theorem} \label{m1} Let $R$ be a regular local ring in one of the cases  of Theorem \ref{p0}, $I\subset R[X]$, $X=(X_1,\ldots,X_r)$ be a monomial ideal and $B=R[X]/I$. Then any finitely generated projective $B$-module is free.
\end{Theorem}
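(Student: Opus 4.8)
The plan is to combine Theorem \ref{p0} with Vorst's Theorem \ref{v}, using only the elementary fact that a finitely generated projective module over a local ring is free.

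First I would fix $R$ to be a regular local ring falling under one of the three cases of Theorem \ref{p0}, so that by that theorem the Bass-Quillen Conjecture \ref{bq} holds for $R$; that is, for every $n\in \bf N$ every finitely generated projective module over $R[T]$, $T=(T_1,\ldots,T_n)$, is extended from $R$. Since $R$ is local, every finitely generated projective $R$-module is free, and therefore ``extended from $R$'' forces such a module to be free: if $P\cong R[T]\otimes_R P'$ with $P'$ finitely generated projective over $R$, then $P'\cong R^m$ for some $m$, whence $P\cong R[T]^m$.

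Next I would apply Theorem \ref{v} with $A=R$ and the given monomial ideal $I\subset R[X]$, $X=(X_1,\ldots,X_r)$, $B=R[X]/I$. The hypothesis of that theorem, namely that for all $n$ every finitely generated projective $A[T]$-module is extended from a finitely generated projective $A$-module, is precisely what the previous paragraph supplies. Hence every finitely generated projective $B$-module $M$ is extended from a finitely generated projective $R$-module $M'$, i.e.\ $M\cong B\otimes_R M'$.

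Finally, $R$ being local, $M'\cong R^m$ for some $m$, so $M\cong B\otimes_R R^m\cong B^m$ is free, which is the assertion. There is essentially no obstacle: the only point requiring care is that the extension hypothesis of Vorst's theorem must hold for every number of variables simultaneously, and this is exactly why it matters that Theorem \ref{p0} asserts the full Bass-Quillen Conjecture (all $n$) in each of its cases; everything else reduces to the observation that over a local ring ``extended'' and ``free'' coincide for finitely generated projectives.
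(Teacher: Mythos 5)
Your proposal is correct and follows exactly the argument the paper intends: combine Theorem \ref{p0} (which gives the Bass--Quillen Conjecture for $R$, i.e.\ the hypothesis of Vorst's theorem with $A=R$) with Theorem \ref{v}, and then use that finitely generated projective modules over the local ring $R$ are free. The paper states this only in the one-line remark preceding the theorem; you have simply spelled it out.
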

 The Bass-Quillen Conjecture could also hold when $R$ is not regular as shows the following theorem.
\begin{Theorem}\label{m2} Let $R$ be a regular local ring  in one of the cases  of Theorem \ref{p0},   $I\subset R[X]$, $X=(X_1,\ldots,X_r)$ be a monomial ideal and $A=R[X]/I$. Then every finitely generated $A[T]$-module, $T=(T_1,\ldots, T_n)$ is free.
\end{Theorem}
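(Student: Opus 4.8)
The plan is to deduce the statement directly from Theorem \ref{m1}, the only input being the elementary remark that adjoining polynomial variables to $A=R[X]/I$ does not leave the class of rings to which that theorem applies. First I would rewrite $A[T]=R[X][T]/IR[X][T]$ and set $X'=(X_1,\ldots,X_r,T_1,\ldots,T_n)$, $I'=IR[X']$, so that $A[T]\cong R[X']/I'$. If $m_1,\ldots,m_k$ are monomial generators of $I$ in the variables $X$, they are a fortiori monomials in the larger set of variables $X'$, hence $I'=(m_1,\ldots,m_k)R[X']$ is again a monomial ideal of the polynomial ring $R[X']$. (If $I$ is the unit ideal then $A=0$ and there is nothing to prove, so we may assume $I'$ is proper.)

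The ring $R$ itself is unchanged, so it is still a regular local ring lying in one of the three cases of Theorem \ref{p0}. I would then apply Theorem \ref{m1} with $(X,I)$ replaced by $(X',I')$, which yields that every finitely generated projective $R[X']/I'=A[T]$-module is free. This is exactly the assertion.

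I do not expect a genuine obstacle: the whole argument is the observation that the hypotheses of Theorem \ref{m1} are stable under the operation $B\mapsto B[T]$ for $B=R[X]/I$ a monomial quotient, and the one point to check — that the extension of a monomial ideal along the inclusion $R[X]\hookrightarrow R[X,T]$ stays monomial — is immediate. It is worth noting that here the base ring $A=R[X]/I$ of the Bass--Quillen type conclusion is in general not regular, so this records a genuinely non-regular instance of the conjecture obtained for free from Theorem \ref{m1}.
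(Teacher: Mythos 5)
Your proof is correct and is exactly the paper's argument: the paper's one-line proof is ``note that $A[T]$ is a factor of $R[X,T]$ by the monomial ideal $IR[X,T]$,'' after which Theorem \ref{m1} applies verbatim. (As you implicitly noticed by writing ``projective'' in your conclusion, the statement in the paper has a typographical omission — it should read ``every finitely generated \emph{projective} $A[T]$-module is free.'')
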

For the proof note that $A[T]$ is a factor of $R[X,T]$ by the monomial ideal $IR[X,T]$.
\begin{Remark} {\em If  $I$ is not monomial above  then the Bass-Quillen Conjecture may fail for $A$. Indeed, if $A={\bf R}[X_1,X_2]/(X_1^2-X_2^3)$ then there exist finitely generated projective $A[T]$-modules of rank one which are not free (see \cite[(5.10)]{La}).}
\end{Remark}

{\bf Acknowledgment}

The author would like to thank Herwig Hauser for suggesting the statement of  Theorem \ref{pp} and Guillaume Rond for talks on the proof of Theorem \ref{arc}.
 The main part of this paper  was done within the special semester on Artin Approximation of the Chaire Jean Morlet at CIRM, Luminy, spring 2015.
 Also the author owes  thanks to the members of the Luminy seminar organized by Hauser and Rond for  several discussions, and to a Referee who hinted a mistake in a earlier proof of Theorem \ref{arc1}.

\vskip 0.5 cm

\end{document}